\documentclass[10pt,a4paper]{article}
\usepackage{amsopn}

\usepackage{graphics}
%% or use the graphicx package for more complicated commands
\usepackage{graphicx}
%% or use the epsfig package if you prefer to use the old commands
\usepackage{epsfig}

%% The amssymb package provides various useful mathematical symbols
\usepackage{amssymb}
%% The amsthm package provides extended theorem environments
\usepackage{amsthm}
\usepackage{amsmath}
\usepackage{amsfonts}
\usepackage{lipsum}
\usepackage[left=2.5cm,right=2.5cm,bottom=3.5cm,top=3.5cm]{geometry}
% % % % % % % % % % % % % % % % % %
\usepackage{empheq}
\usepackage{epstopdf}
\usepackage{changepage}
\usepackage{rotating}
\usepackage{adjustbox}
\usepackage{graphbox}
\usepackage{color}
\usepackage{xcolor}
\usepackage{dsfont}
\usepackage{dutchcal}
\usepackage{hyperref} % referencias cruzadas 
% % % % % % % % % % % % % % % % % %
\usepackage{float}
\usepackage{resizegather}
\usepackage{multicol}
% Tablas
\usepackage{booktabs} % mejora la calidad de las tablas y proporciona nuevas opciones
\usepackage{colortbl} % coloreado en tablas
\usepackage{multirow} % tablas con celdas de varias lineas
\usepackage{longtable,tabu} % tablas
\usepackage{hhline}   % mejora las lineas horizontales en las tablas
\usepackage{anyfontsize} % tamanos de fuente personalizados
\usepackage{everysel}
\usepackage{psfrag}
% Titles
\usepackage{titlesec}
% Figures
\usepackage{bbm}
\usepackage{bm}
\usepackage{pgf,tikz}

\newtheorem{lema}[section]{Lemma}

\newtheorem{remark2}[]{Remark}

\def\CFL{\rm{CFL}}
\newcommand{\dep}[2]{{\displaystyle \frac{\partial #1}{\partial #2}}}

\pagestyle{myheadings}

\allowdisplaybreaks
% % % % % % % % % % % % % % % % % % % % % % % % % % % % %
%               Datos del .pdf generado                 %
% % % % % % % % % % % % % % % % % % % % % % % % % % % % %
\hypersetup{
	%    bookmarks=true,         % show bookmarks bar?
	%    unicode=false,          % non-Latin characters in Acrobat?s bookmarks
	%    pdftoolbar=true,        % show Acrobat?s toolbar?
	%    pdfmenubar=true,        % show Acrobat?s menu?
	%    pdffitwindow=false,     % window fit to page when opened
	%    pdfstartview={FitH},    % fits the width of the page to the window
	pdftitle={A projection hybrid high order finite volume/finite element method for incompressible turbulent flows},    % title
	pdfauthor={S. Busto, J.L. Ferrin, E.F. Toro, M.E. Vazquez-Cendon},     % author
	pdfcreator={S. Busto, J.L. Ferrin, E.F. Toro, M.E. Vazquez-Cendon},   % creator of the document
	%    pdfproducer={Producer}, % producer of the document
	%    pdfkeywords={keyword1} {key2} {key3}, % list of keywords
	%    pdfnewwindow=true,      % links in new window
	colorlinks=true,       % false: boxed links; true: colored links
	%    linkcolor=red,          % color of internal links (change box color with linkbordercolor)
	%    citecolor=green,        % color of links to bibliography
	%    filecolor=magenta,      % color of file links
	%    urlcolor=cyan           % color of external links
}

%\journal{Journal of Computational Physics}
\title{A projection hybrid high order finite volume/finite element method for incompressible turbulent flows} 

\begin{document}

\begin{center}
	\textbf{ \Large{A semi-implicit hybrid finite volume / finite element scheme \\ for all Mach number flows on staggered unstructured meshes} }
	
	\vspace{0.5cm}
	{S. Busto\footnote{saray.busto@usc.es}, J. L. Ferr\'in\footnote{joseluis.ferrin@usc.es}, E. F. Toro\footnote{eleuterio.toro@unitn.it}, M. E. V\'azquez-Cend\'on\footnote{elena.vazquez.cendon@usc.es}}
	
	\vspace{0.2cm}
	{\small
		\textit{$^{(1,2,4)}$ Departamento de Matem\'atica Aplicada, Universidade de Santiago de Compostela. Facultad de Matem\'aticas, ES-15782 Santiago de Compostela, Spain}
		
		\textit{$^{(3)}$ Department of Civil, Environmental and Mechanical Engineering, University of Trento, Via Mesiano 77, 38123 Trento, Italy}
	}
\end{center}

% % % % % % % % % % % % % % % % % % % % % % % % % % % % % %
%                   Abstract                              %
% % % % % % % % % % % % % % % % % % % % % % % % % % % % % %
\hrule
\vspace{0.4cm}

\begin{center}
	\textbf{Abstract}
\end{center} 

\vspace{0.1cm}
In this paper the  projection hybrid FV/FE method presented in \cite{BFSV14} is extended to account for species transport equations.
Furthermore, turbulent regimes are also considered thanks to the $k-\varepsilon$ model.
Regarding the transport diffusion stage new schemes of high order of accuracy are developed. The CVC Kolgan-type scheme and ADER methodology are extended to 3D. The latter is modified in order to profit from the dual mesh employed by the projection algorithm and the derivatives involved in the diffusion term are discretized using a Galerkin approach.
The accuracy and stability analysis of the new method are carried out for the advection-diffusion-reaction equation.
Within the projection stage the pressure correction is computed by a piecewise linear finite element method.
Numerical results are presented, aimed at verifying the formal order of accuracy of the scheme and to assess the performance of the method on several realistic test problems.
\
% % % % % % % % % % % % % % % % % % % % % % % % % % % % % %
%                   Keywords                              %
% % % % % % % % % % % % % % % % % % % % % % % % % % % % % %
\vspace{0.2cm}
\noindent \textit{Keywords:} 
incompressible flows; $k-\varepsilon$ species transport; finite volume method; LADER; finite element method.

\vspace{0.4cm}

\hrule

% % % % % % % % % % % % % % % % % % % % % % % % % % % % % %
% % % % % % % % % % % % % % % % % % % % % % % % % % % % % %
%              Introduction                               %
% % % % % % % % % % % % % % % % % % % % % % % % % % % % % %
% % % % % % % % % % % % % % % % % % % % % % % % % % % % % %
\section{Introduction}
Finite volume methods combined with approximate
Riemann solvers have been successfully developed for different kinds of
flows in the 1980's (see, \cite{Toro} and the references
therein). Focusing on the incompressible case, pressure results in a
Lagrange multiplier that adapts itself to ensure that the velocity
satisfies the incompressibility condition. In order to handle
this situation, the typical explicit stage of finite volume
methods has to be complemented with the so-called projection stage
where a pressure correction is computed in order to get a
divergence-free velocity. Many papers exist in the literature
devoted to introduce and analyse projection finite volume methods
for incompressible Navier-Stokes equations (see, for instance,
\cite{BDALR02} or \cite{PABCHL95}). In order to get stability, staggered
grids have been used to discretize velocity and pressure. While
this can be done straightforwardly in the context of structured
meshes, the adaptation to unstructured meshes is more challenging (see
\cite{BDDV98}, \cite{GGHL08}, \cite{GLL12},
\cite{PBH04}, \cite{TA07}, \cite{TAPW09}).

On the other hand, projection methods have been also used in
combination with finite element discretizations (see
\cite{Guer06}). Within this approach, the divergence-free
condition for the velocity is replaced by an equation prescribing the divergence of
the linear momentum density which is a conservative variable.

The scope of this paper is to extend the hybrid FV/FE projection method
introduced in \cite{BFSV14} for both laminar and turbulent flows considering also
transport of species.
Furthermore, new methods to increase the accuracy of the methodology are developed.

Starting from a 3D tetrahedral finite element mesh of the
computational domain, the equation of the transport-diffusion stage
is discretized by a finite volume method associated with a dual
finite volume mesh where the nodes of the volumes are the barycentre
of the faces of the initial tetrahedra. These volumes, which allow us
for an easy implementation of flux boundary conditions, have already
been used, among others, for the 2D shallow water equation (see \cite{BDDV98}),
for solving conservative and non conservative systems in 2D and 3D (see \cite{THD09} and  \cite{DHCPT10}) 
and for DG schemes employed to solve compressible Navier-Stokes equations (see \cite{TD17}).
For time discretization we use the explicit Euler
scheme. The convective term is upwinded using the Rusanov scheme (see
\cite{Toro} and \cite{VC15}). Concerning the projection stage, the
pressure correction is computed by continuous piecewise linear
finite elements associated with the initial tetrahedral mesh. The
use of the above ``staggered'' meshes together with a simple
specific way of passing the information from the transport-diffusion
stage to the projection one and vice versa leads to a stable scheme.
The former is done by redefining the conservative variable (i.e. the
momentum density) constant per tetrahedron. Conversely, the finite
element pressure correction is redefined to constant values on the
faces of the finite volumes and then used in the transport-diffusion
stage. 

The coupling of Navier-Stokes equations and the turbulence model 
introduces turbulent viscosity which is typically computed by solving
an additional pair of advection-diffusion-reaction equations, that is
equations for the turbulent kinetic energy and { the} dissipation rate. 
One issue here is the time dependency of the viscous terms. 
This requires the use of methods that are at least second-order accurate
in space and time for all terms involved (see \cite{CFVC06} and \cite{Saa11}).

For advection equations, several approaches for constructing high-order
methods have been put forward. A classical example is the Lax-Wendroff 
scheme (see \cite{LW60} and \cite{Lax57}). This scheme is linear in the
sense of Godunov and thus oscillatory, according to Godunov's theorem, \cite{God59}.
A major step forward in this direction was the work of Kolgan \cite{Kol11},
who introduced, for the first time, a numerical scheme that circumvents Godunov's theorem,
via the construction of a non-linear scheme using non-linear reconstructions (see \cite{Ber06}).
{ Following these works, a new Kolgan-type method
has been presented for the shallow water equations in \cite{CV12}. In what follows,
we will refer to this scheme as the CVC Kolgan-type scheme.}

In the present paper, the CVC Kolgan-type scheme is analysed and implemented at
the transport-diffusion stage for the convective
terms of the considered conservation laws: momentum conservation,
transport equations and  $k$-$\varepsilon$ model. The obtained scheme,
second order in space and first order in time 
is combined with a Galerkin approach of the gradients involved in the viscous term.
An alternative option will be the decomposition of the diffusion term into 
its orthogonal and non-orthogonal parts as introduced in \cite{BFSV14}.

More advanced non-linear methods for advection dominated
problems have appeared in the literature since the introduction of Kolgan scheme.
Some of them are: Total Variation Diminishing Methods (TVD), Flux Limiter Methods,
MUSCL-Hancock, semi-discrete ENO or WENO (see, for instance, \cite{Leer84},
\cite{VL97}, \cite{Swe84}, \cite{Leer84}, \cite{HEOC87}, \cite{CT08},
\cite{SO88} and \cite{LSC94}). Comprehensive reviews are found in
\cite{Toro} and \cite{LV02}, for example.
Focusing on high order in time and space methodologies, we highlight the ADER approach,
first put forward in \cite{TMN01}. It is  also a fully discrete approach that relies
on non-linear reconstructions and the solution of the generalised Riemann problem,
to any order of accuracy. The resulting schemes are arbitrarily accurate in both
space and time in the sense that they have no theoretical accuracy barrier.
An introduction to ADER schemes is found in Chapters 19 and 20 of \cite{Toro}. 
Further developments and applications are found, for example, in {
\cite{TT04}, \cite{DM05}, \cite{Tit05}, \cite{TT05tvd}, \cite{TT05}, \cite{TDTK06}, \cite{TT06}, \cite{TT06ecc},
\cite{Tak06}, \cite{TT07}, \cite{Zah08}, \cite{DBTM08}, \cite{DET08},
 \cite{Dum10},  \cite{HD11}, \cite{BD14}, \cite{MT14}.}

In \cite{BTVC16} an extension of ADER methodology to solve the advection-diffusion-reaction equation,
admitting space and time dependent diffusion coefficients was introduced.
The present work includes a modification of this scheme, the Local ADER method (LADER),
which profits from the dual mesh. Moreover, an ENO-based reconstruction is
considered in order to prevent spurious oscillations.

To asses the performance of the methodology
different manufactured solution tests are introduced and the 
numerical results obtained with the developed computer
code are shown.
Furthermore, several classical test problems from fluid mechanics are presented
and some results are compared with experimental data (see \cite{Saa11} and \cite{BFSV14}).

The paper is organized as follows.
In Section \ref{sec:math} the mathematical model for incompressible flows is recalled.
Then, the RANS $k-\varepsilon$ model for the turbulence and the species transport equations are described. 
In Section \ref{sec:ndfv} the numerical discretization is detailed.
Special attention is paid to the description of the finite volume algorithm.
Aiming to achieve a high order scheme, two different methodologies for the
flux terms are developed: the CVC Kolgan-type method, second order in space
and first order in time, and the LADER methodology, second order in space and time.
The needed modifications on the approximation of remaining terms of the equations
to achieve a high order scheme are also presented and a Galerkin approach
to compute the diffusion terms is introduced.
Finally, in Section \ref{sec:numer_res} some of the numerical results
obtained with the developed code are shown. On the one hand, the order
of convergence of the method is analysed using the method of manufactured solutions.
On the other hand, several classical test problems are analysed.
The appendix includes the theoretical analysis of the LADER numerical
method applied to the one-dimensional advection-diffusion-reaction equation.

% % % % % % % % % % % % % % % % % % % % % % % % % % % % % %
% % % % % % % % % % % % % % % % % % % % % % % % % % % % % %
%              Mathematical model                         %
% % % % % % % % % % % % % % % % % % % % % % % % % % % % % %
% % % % % % % % % % % % % % % % % % % % % % % % % % % % % %
\section{Governing equations} \label{sec:math}
In this section, the system of equations to be solved is introduced.
The model for 
incompressible newtonian fluids, recalled in \cite{BFSV14}, is extended considering
a turbulent regime and taking into account the transport of species.

\subsection{Mass conservation and momentum equations}
The incompressible Navier-Stokes equations reduce to the
mass conservation equation and the momentum equation.
Hence, the system of equations written in conservative variables reads
\begin{align}
&& \mathrm{div}\mathbf{w}_\mathbf{u}  =0, \label{eq:edcg}\\
&& \dep{\mathbf{w}_\mathbf{u}}{t} +{ { \mathrm{div}  \mathbf{{\cal
			F}^{\mathbf{w}_\mathbf{u}}}(\mathbf{w}_{\mathbf{u}}} )} +{ \nabla \pi }
-\mathrm{div}(\tau)={ \mathbf{f}_{\mathbf{u}} }. \label{eq:emcg}
\end{align}
Standard notation is used: 
\begin{itemize}
	\item $\rho$ is the density (kg/m$^3$),
	\item $p=\pi + \overline{\pi}$ is the pressure (N/m$^2$),
	\begin{itemize}
		\item $\overline{\pi}$ is the mean pressure,
		\item $\pi$ is the pressure perturbation,
	\end{itemize}
	\item $\mathbf{u}=(u_1,u_2,u_3)^t$ is the velocity vector (m/s),
	\item $\mathbf{w}_{\mathbf{u}}:=\rho\mathbf{u}$ is the vector of the
	conservative variables related to the velocity (kg/s$\:$m$^2$),
	\item $\mathbf{{\cal F}}^{\mathbf{w_u}}$ is the flux tensor:
	\[{ \mathbf{\cal F}_{i}^{\mathbf{w}_{\mathbf{u}}}
	(\mathbf{w}_{\mathbf{u}})= \frac{1}{\rho} \mathbf{w}_i \mathbf{w}_{\mathbf{u}}  }=
	u_i \mathbf{w}_{\mathbf{u}},\; i=1,2,3,\]
	\item  $\tau$ is the viscous part of the Cauchy stress tensor, 
	\item $\mathbf{f}_{\mathbf{u}}$ is a generic source term  used for the manufactured test problems.
\end{itemize}

\subsection{Turbulence model}
Special care for the viscous part of Cauchy stress tensor, $\tau $, is required for turbulent regimes.
In order to avoid the high computational cost of a direct simulation of the turbulence, the $k-\varepsilon$
standard model is used (see  \cite{Ber05} and \cite{Cha14}). The Reynolds-averaged viscous stress tensor is given by
\begin{equation}
\tau = \tau_{\mathbf{u}} + \tau^{\mathbf{R}}.\end{equation}
Denoting $\mu$ the laminar viscosity (kg/(m s)),  the averaged stress tensor, $\tau_{\mathbf{u}}$, reads
\begin{equation}\tau_{\mathbf{u}}=\mu \left(\nabla \mathbf{u}+\nabla\mathbf{u}^T\right).\end{equation}
The fluctuation, $\tau^{\mathbf{R}}$, called the Reynolds tensor, is given by
\begin{equation}\tau^{\mathbf{R}}=\mu_t \left(\nabla \mathbf{u}+\nabla\mathbf{u}^T\right)-\frac{2}{3} \rho k \mathbf{I}.\end{equation} 
To obtain the turbulent viscosity,
\begin{equation}\mu_t=\rho C_\mu \frac{k^2}{\varepsilon},\end{equation}
two new variables are introduced:   
the turbulent kinetic energy, $k$ (J/kg), and  the energy dissipation rate, $\varepsilon$ (J/(kg s)).
They are computed from a new pair of partial differential equations, namely,
\begin{eqnarray}
% k-epsilon
\frac{\partial w_{k}}{\partial t}+
\mathrm{div} \mathcal{F}^{w_k}\left(w_{k},{ \mathbf{u}}\right)
-\mathrm{div}\left[\left(\mu+\frac{\mu_t}{\sigma_k}\right)\nabla\left( \frac{w_k}{\rho}\right) \right]
+w_{\varepsilon}=G_k+f_{k},\label{eq:tasa_disipacion_viscosa1} \\[0.3cm] 
\frac{\partial w_{\varepsilon}}{\partial t}+
\mathrm{div} \mathcal{F}^{w_\varepsilon}\left(w_{\varepsilon},{ \mathbf{u}}\right)
-\mathrm{div}\left[\left(\mu+\frac{\mu_t}{\sigma_\varepsilon}\right)\nabla \left( \frac{w_{\varepsilon}}{\rho}\right) \right]
+ C_{2\varepsilon} \frac{w_\varepsilon^2}{w_{k}} =C_{1\varepsilon} \frac{w_{\varepsilon}}{w_{k}}G_k+f_{\varepsilon},
\label{eq:energia_cinetica_turbulenta1}
\end{eqnarray}
where
\begin{itemize}
	\item $w_{k}$ (J), $w_{\varepsilon}$ (J/s) are the conservative variables corresponding to $k$ and $\varepsilon$, that is
	\begin{equation*}
	w_{k}:=\rho k, \quad 	w_{\varepsilon}:=\rho\varepsilon,
	\end{equation*}
	\item  $\mathcal{F}^{w_{k}} ,\, \mathcal{F}^{w_\varepsilon} $ are the fluxes related to the turbulence variables,
	\begin{equation*}
	\mathcal{F}^{w_{k}}_i\left(w_{k},{ \mathbf{u}}\right)=u_i w_{k}, \quad
	\mathcal{F}^{w_{\varepsilon}}_i\left(w_{\varepsilon},{ \mathbf{u}}\right)=u_i w_{\varepsilon},
	\end{equation*}
	\item $G_k$ is the term of kinetic energy production, due to the mean velocity gradients, of the Reynolds stress tensor,
	\begin{equation}
	G_k=\frac{\mu_t}{2} \left[\sum_{i=1}^{3}\sum_{j=1}^{3}\left( \frac{\partial u_i}{\partial x_j}
	+\frac{\partial u_j}{\partial x_i}\right) \right]^2,\label{eq:gk}
	\end{equation}
	\item $f_{k}$, $f_{\varepsilon}$ are the source terms related to manufactured
	solutions for analytical tests; they have zero value in physical problems,
	\item $\sigma_k=1.0$, $\sigma_\varepsilon=1.3$ are the turbulent Prandtl numbers,
	\item $C_{\mu}=0.09$, $C_{1\varepsilon}=1.44$, $C_{2\varepsilon}=1.92$ are the closure
	coefficients of the model whose values were taken from the literature. 
\end{itemize}

\subsection{Species transport}
The equations of transport of species are also included in the system to be solved:
\begin{eqnarray}
\frac{\partial \mathbf{w}_{\mathbf{y}}}{\partial t}+\mathrm{div}
\mathcal{F}^{\mathbf{w}_{\mathbf{y}}}\left(\mathbf{w}_{\mathbf{y}},{ \mathbf{u}}\right) 
-\mathrm{div}\left[\left(\rho \mathcal{D}+\frac{\mu_t}{Sc_t}\right)\nabla\left( \frac{1}{\rho}\mathbf{w}_{\mathbf{y}}\right) \right]
=\mathbf{f}_{\mathbf{y}},\label{eq:especies1}
\end{eqnarray}
with
\begin{itemize}
	\item $ \mathbf{y}=\left(\mathrm{y}_{1},\dots,\mathrm{y}_{N_{e}}\right)^{T}$ 
	the mass fraction vector of the species to be considered. $\mathrm{y}_i$ corresponds
	to { species} $i$ and $N_e$ is the total number of species to be considered,
	\item $\mathbf{w}_{\mathbf{y}}:=\rho\mathbf{y}$  the conservative variable vector related to the mass fraction vector,
	\item $\mathcal{F}^{\mathbf{w}_{\mathbf{y}}}$  the flux,
	\begin{equation*}
	\mathcal{F}^{\mathbf{w}_{\mathbf{y}}}_i\left(\mathbf{w}_{\mathbf{y}},{ \mathbf{u}}\right)=u_i \mathbf{w}_{\mathbf{y}}
	\end{equation*}
	\item $\mathcal{D}$  the mass diffusivity coefficient (m$^2$/s),
	\item $Sc_t=0.7$  the turbulent Schmidt number,
	\item $\mathbf{f}_{\mathbf{y}}$  the source term for manufactured test problems.
\end{itemize}

\subsection{Complete system}
The complete system of equations to be solved is
\begin{eqnarray}
\mathrm{div}\mathbf{w}_\mathbf{u}  =0, \label{eq:masa}\\
\dep{\mathbf{w}_\mathbf{u} }{t} + { \textrm{div} \mathbf{{\cal
			F}^{w_u}}(\mathbf{w}_\mathbf{u} )} +{ \nabla \pi }
-\mathrm{div}(\tau)={ \mathbf{f}_{\mathbf{u}} }, \label{eq:momentos} \\
% k eps
\frac{\partial w_{k}}{\partial t}+
\mathrm{div} \mathcal{F}^{w_k}\left(w_{k},\mathbf{u}\right)
-\mathrm{div}\left[\left(\mu+\frac{\mu_t}{\sigma_k}\right)\nabla\left( \frac{w_k}{\rho}\right) \right]+w_{\varepsilon}=G_k+f_{k}, \label{eq:tasa_disipacion_viscosa2}\\  
\frac{\partial w_{\varepsilon}}{\partial t}\!+\!
\mathrm{div} \mathcal{F}^{w_\varepsilon}\!\left(w_{\varepsilon},\mathbf{u}\right)\!
-\!\mathrm{div}\!\left[\!\left(\mu+\frac{\mu_t}{\sigma_\varepsilon}\right)\nabla \left( \frac{w_{\varepsilon}}{\rho}\right)\! \right]\!\!+\! C_{2\varepsilon} \frac{w_\varepsilon^2}{w_{k}}\!=\!C_{1\varepsilon} \frac{w_{\varepsilon}}{w_{k}}G_k\!+\!f_{\varepsilon}, \label{eq:energia_cinetica_turbulenta2}\\ 
% Species
\frac{\partial \mathbf{w}_{\mathbf{y}}}{\partial t}+\mathrm{div} \mathcal{F}^{\mathbf{w}_{\mathbf{y}}}\left(\mathbf{w}_{\mathbf{y}},\mathbf{u}\right) 
-\mathrm{div}\left[\left(\rho \mathcal{D}+\frac{\mu_t}{Sc_t}\right)\nabla\left( \frac{1}{\rho}\mathbf{w}_{\mathbf{y}}\right) \right]=\mathbf{f}_{\mathbf{y}}.\label{eq:especies2} 
\end{eqnarray}
Moreover, the vector of the conservative variables is
$\mathbf{w}=(\mathbf{w}_{\mathbf{u}},\widehat{\mathbf{w}} )^T, $
with $\widehat{\mathbf{w}}$ the vector of the conservative variables
related with turbulence, { and species}, i.e.,
$\widehat{\mathbf{w}} =(w_{k}, w_{\varepsilon}, \mathbf{w}_{\mathbf{y}} )^T$.
The flux tensor of the complete system has three components:
\begin{equation}
\mathcal{F}=\left( \mathcal{F}_1 | \mathcal{F}_2 | \mathcal{F}_3 \right)_{\left( 3+2+N_e\right) \times 3}, \quad \mathcal{F}_i\left(\mathbf{w}\right) =
\frac{w_i}{\rho} \mathbf{w},\quad i=1,2,3.
\end{equation}

% % % % % % % % % % % % % % % % % % % % % % % % % % % % % %
% % % % % % % % % % % % % % % % % % % % % % % % % % % % % %
%              Numerical discretization                   %
% % % % % % % % % % % % % % % % % % % % % % % % % % % % % %
% % % % % % % % % % % % % % % % % % % % % % % % % % % % % %
\section{Numerical discretization  \label{sec:ndfv}}
The numerical discretization of the complete system is performed by extending
the projection method first put forward in \cite{BFSV14}.
The developed numerical method solves, at each time step, equations
\eqref{eq:momentos}-\eqref{eq:especies2} with a finite volume method (FVM) and, 
so, an approximation of $\mathbf{w}$ is obtained. The next step is 
applying projection to system \eqref{eq:masa}-\eqref{eq:momentos}.
The pressure correction is provided by a piecewise
linear finite element method (FEM). In the post-projection step, an approximation of
$\mathbf{w}_{\mathbf{u}}$ verifying the divergence condition, \eqref{eq:masa}, is
obtained. Furthermore, the production terms of the turbulence equations
are also computed in this step to account for the corrected velocities.
The reaction terms are treated via a semi-implicit method.

We start by considering a two-stage in time discretization
scheme:
in order to get the solution
at time $t^{n+1}$, we use the previously obtained approximations
$\mathbf{W}^n$ of the conservative variables $\mathbf{w}(x,y,z,t^n)$,
$\mathbf{U}^n$ of the velocity $\mathbf{u}(x,y,z,t^n)$
and $\pi^n$ of the pressure perturbation $\pi(x,y,z,t^n)$, and
compute $\mathbf{W}^{n+1}$ and $\pi^{n+1}$ from the following system
of equations:
\begin{eqnarray}
\frac{1 }{\Delta t}\left( \widetilde{\mathbf{W}}^{n+1}_{\mathbf{u}}-\mathbf{W}^{n}_{\mathbf{u}}\right)   +  \mathrm{div} \mathbf{{\cal F}}^{\mathbf{w}_{\mathbf{u}}}(
\mathbf{W}^{n}_{\mathbf{u}})
+  \nabla \pi^{n} - \mathrm{div}( \tau^n)=\mathbf{f}^n_{\mathbf{u}} , \label{eq:tidenincrec} \\
\frac{ 1 }{\Delta t} \left( \mathbf{W}^{n+1}_{\mathbf{u}}-  \mathbf{\widetilde{W}}^{n+1}_{\mathbf{u}}\right)  + 
\nabla (  \pi^{n+1} - \pi^{n}) =0, \,   \label{eq:tideincre2} \\
\mathrm{div}\mathbf{W}^{n+1}_{\mathbf{u}}=0, \label{eq:qincrec}\\[0.4cm]
% k-epsilon
\frac{1}{\Delta t}\left( \widetilde{W}_k^{n+1}-W_k^n\right) +\mathrm{div}\mathcal{F}^{w_{k}}\left(W_{k}^n,\mathbf{U}^{n} \right) -\mathrm{div}\left[ \left( \mu+\frac{\mu_t}{\sigma_k} \right)\nabla \frac{W_k^n}{\rho} \right]=0,\label{eq:k_discret}\\
\frac{1}{\Delta t}\left( W_k^{n+1}-\widetilde{W}_k^{n+1}\right) + W_{\varepsilon}^{n}-G_k^{n+1}=f^{n}_k,\label{eq:k_post_disc}\\[0.3cm]
\frac{1}{\Delta t}\left( \widetilde{W}_{\varepsilon}^{n+1}-W_{\varepsilon}^n\right) +\mathrm{div}\mathcal{F}^{w_{\varepsilon}}\left(W_{\varepsilon}^n,\mathbf{U}^{n} \right) -\mathrm{div}\left[ \left( \mu+\frac{\mu_t^{n}}{\sigma_{\varepsilon}} \right)\nabla\frac{W_{\varepsilon}^n}{\rho} \right]=0,\label{eq:epsilon_discret}\\
\frac{1}{\Delta t}\left( W_{\varepsilon}^{n+1}-\widetilde{W}_{\varepsilon}^{n+1}\right) + C_{2\varepsilon}\frac{W_{\varepsilon}^{n+1}W_{\varepsilon}^{n}}{W_{k}^{n}} 
-C_{1\varepsilon}\frac{W_{\varepsilon}^{n}}{W_{k}^{n}}G_k^{n+1}=f^{n}_{\varepsilon},\label{eq:epsilon_post_disc}\\
% Especies
\frac{1}{\Delta t}\!\left( \widetilde{\mathbf{W}}_{\mathbf{y}}^{n+1}\!-\!\mathbf{W}_{\mathbf{y}}^n\right) \!+\!\mathrm{div}\mathcal{F}^{\mathbf{w}_{\mathbf{y}}}\!\left(\mathbf{W}_{\mathbf{y}}^n,\mathbf{U}^{n} \right)\! -\!\mathrm{div}\left[\! \left( \rho\mathcal{D}+\frac{\mu_t^{n}}{Sc_t} \right)\nabla\! \left(\!\frac{1}{\rho}\mathbf{W}_{\mathbf{y}}^n\!\right) \!\right] = 0,\label{eq:especies_discret}\\
\frac{1}{\Delta t}\left( \mathbf{W}_{\mathbf{y}}^{n+1} - \widetilde{\mathbf{W}}_{\mathbf{y}}^{n+1}\right)  =  \mathbf{f}^{n}_{\mathbf{y}}. \label{eq:especies_post_disc}
\end{eqnarray}
Concerning the discretization of mass conservation and momentum equations,
by adding equations \eqref{eq:tidenincrec}-\eqref{eq:tideincre2}, we easily see that the scheme is
actually implicit for the pressure term. However, the writing above
shows that the pressure and the velocity can be solved in three uncoupled stages. The first of them
corresponds to equation \eqref{eq:tidenincrec} and will be called
the transport-diffusion stage; it is explicit and allows us to
compute the intermediate approximation of the conservative variables
$\widetilde{\mathbf{W}}^{n+1}_{\mathbf{u}}$ (we notice that, in general, this
approximation  does not satisfy the divergence condition
\eqref{eq:qincrec}). The second one, to be called the projection
stage, is implicit; it consists of solving the coupled equations
\eqref{eq:tideincre2} and \eqref{eq:qincrec} with a finite element method
to obtain the pressure correction $\delta^{n+1}:=\pi^{n+1}-\pi^{n}$.
The last one is the post-projection stage; the intermediate approximation for the
velocity conservative variables is updated with the pressure correction providing
the final approximations $\mathbf{W}^{n+1}_{\mathbf{u}}$ and $\pi^{n+1}$ 
(see \cite{BFSV14} for further details).

As a novelty in this paper, for the remaining conservation laws 
the approximation of the conservative variables is obtained in two steps.
At the transport-diffusion stage we compute an approximation of the conservative variables,
$\widetilde{W}_k^{n+1}$, $\widetilde{W}_{\varepsilon}^{n+1}$ and $\widetilde{\mathbf{W}}_{\mathbf{y}}^{n+1}$,  
taking into account the corresponding flux and diffusion terms. Let us remark that at
this stage the update of the approximations involves all the neighbouring nodes of the
finite volume $C_{i}$. On the other hand, the discretization of the source terms related
to manufactured solutions or other production terms in equations  \eqref{eq:k_post_disc},
\eqref{eq:epsilon_post_disc} and \eqref{eq:especies_post_disc} involves pointwise evaluations
at the cell $C_{i}$ which will be computed at the post-projection stage. Furthermore, the
production terms of the turbulence equations are computed taking into account the updated
velocities and the reaction terms are treated via a semi-implicit method. As a result,
we obtain the updated conservative variables $\widetilde{W}_k^{n+1}$, $\widetilde{W}_{\varepsilon}^{n+1}$
and $\widetilde{\mathbf{W}}_{\mathbf{y}}^{n+1}$. 

Summarizing, the overall method consists of:
\begin{itemize}
	\item {\it Transport-diffusion stage:} equations  \eqref{eq:tidenincrec},
	\eqref{eq:k_discret}, \eqref{eq:epsilon_discret} and \eqref{eq:especies_discret}
	are solved by a FVM. 
	\item {\it Projection stage:} the pressure correction
	$\delta^{n+1}:=\pi^{n+1}-\pi^n$ is obtained by solving equations \eqref{eq:tideincre2}
	and (\ref{eq:qincrec}) with a FEM.
	\item {\it Post-projection stage:}  the $\mathbf{\widetilde{W}}^{n+1}_{\mathbf{u}}$ computed
	at the first stage 	is updated by using $\delta^{n+1}$ in order to obtain another
	approximation $\mathbf{W}^{n+1}_{\mathbf{u}}$, satisfying  the divergence
	condition (\ref{eq:qincrec}). Next, the turbulence and species
	variables are updated from equations  \eqref{eq:k_post_disc},
	\eqref{eq:epsilon_post_disc} and \eqref{eq:especies_post_disc}, 
	respectively.
\end{itemize}

% % % % % % % % % % % % % % % % % % % % % % % % % % % % % %
% % % % % % % % % % % % % % % % % % % % % % % % % % % % % %
%            MESH                                         %
% % % % % % % % % % % % % % % % % % % % % % % % % % % % % %
% % % % % % % % % % % % % % % % % % % % % % % % % % % % % %
\subsection{A dual finite volume mesh}
For the space discretization we consider a 3D unstructured tetrahedral
finite element mesh $\{ T_k, \, i=1,\dots ,nel\}$. From this mesh we
build a {\it dual finite volume mesh} as introduced in \cite{BFSV14} and \cite{BDDV98}. The
nodes, to be denoted by $\{ N_i, \, i=1,\dots ,nvol\}$, are the
barycenters of the faces of the initial tetrahedra.  In Figure
\ref{vf3d} node $N_i$ is the barycenter of the face defined by
vertices $V_1$, $V_2$ and $V_3$ (see Figure \ref{vf3d}). This is why we will call this
finite volume of {\it face-type}.

\begin{figure}[H]
	\begin{minipage}{0.5\linewidth}
		\begin{figure}[H]
			\begin{center}
				\vspace{1cm}
				\begin{picture}(100,90)
				\put(0,0){\makebox(100,90){
						\vspace{-2cm}
						\includegraphics[width=7.5cm]{./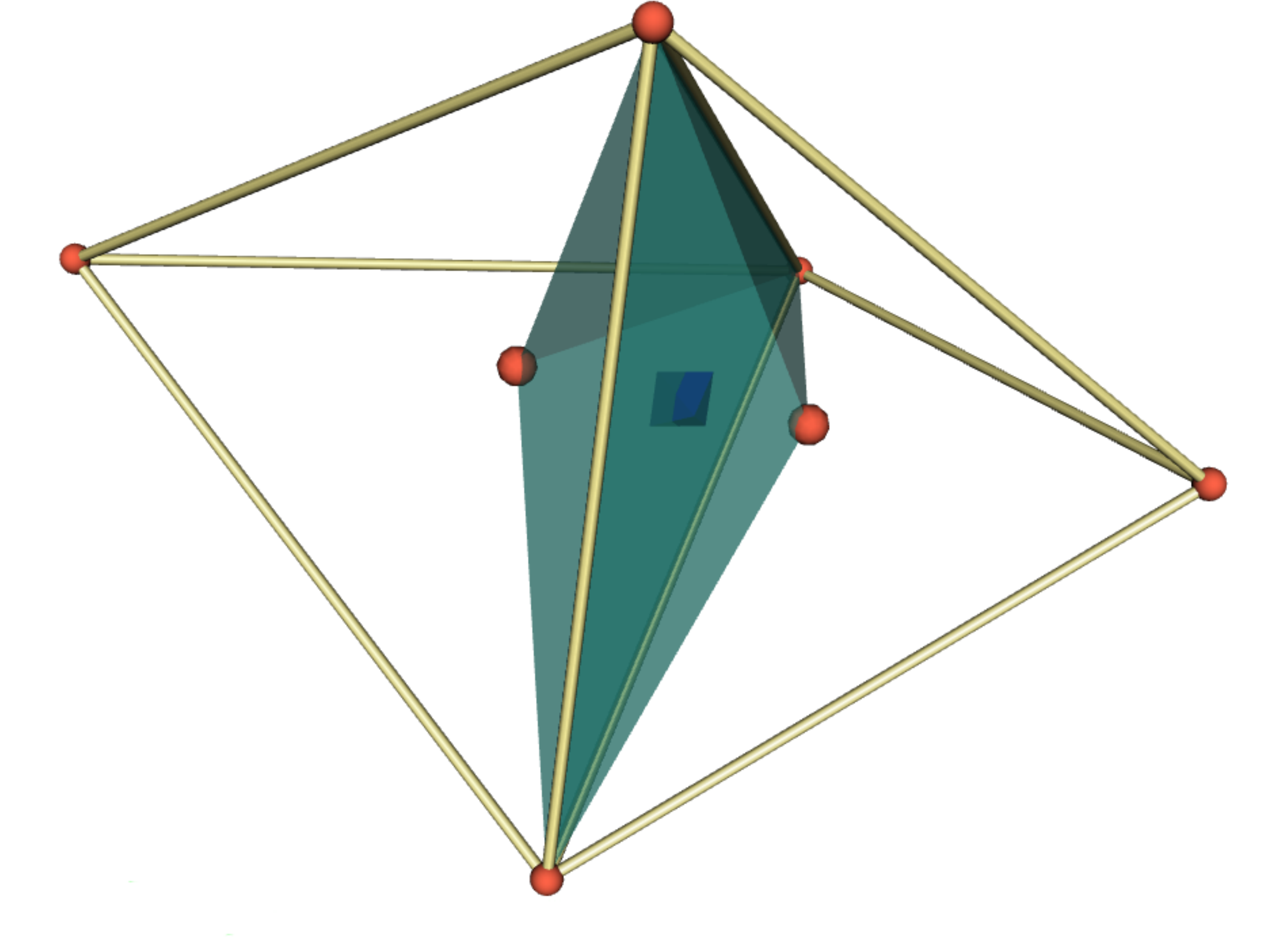}}}
				\put(35,-70){\normalsize{$V_{2}$}}
				\put(81,55){\normalsize{$V_{3}$}}
				\put(60,100){\normalsize{$V_{1}$}}
				\put(-50,62){\normalsize{$V_{4}$}}
				\put(160,12){\normalsize{$V_{4}'$}}
				\put(18,35){\normalsize{$B$}}
				\put(90,22){\normalsize{$B'$}}
				\put(58,37){\normalsize{$N_i$}}
				\end{picture}
				\vspace{2.5cm}
			\end{center}
		\end{figure}
	\end{minipage}\hfill
	\begin{minipage}{0.3\linewidth}
		\begin{figure}[H]
			\begin{center}
				\vspace{1cm}
				\begin{picture}(100,90)
				\put(0,0){\makebox(100,90){
						\vspace{-2cm}
						\includegraphics[width=4.5cm]{./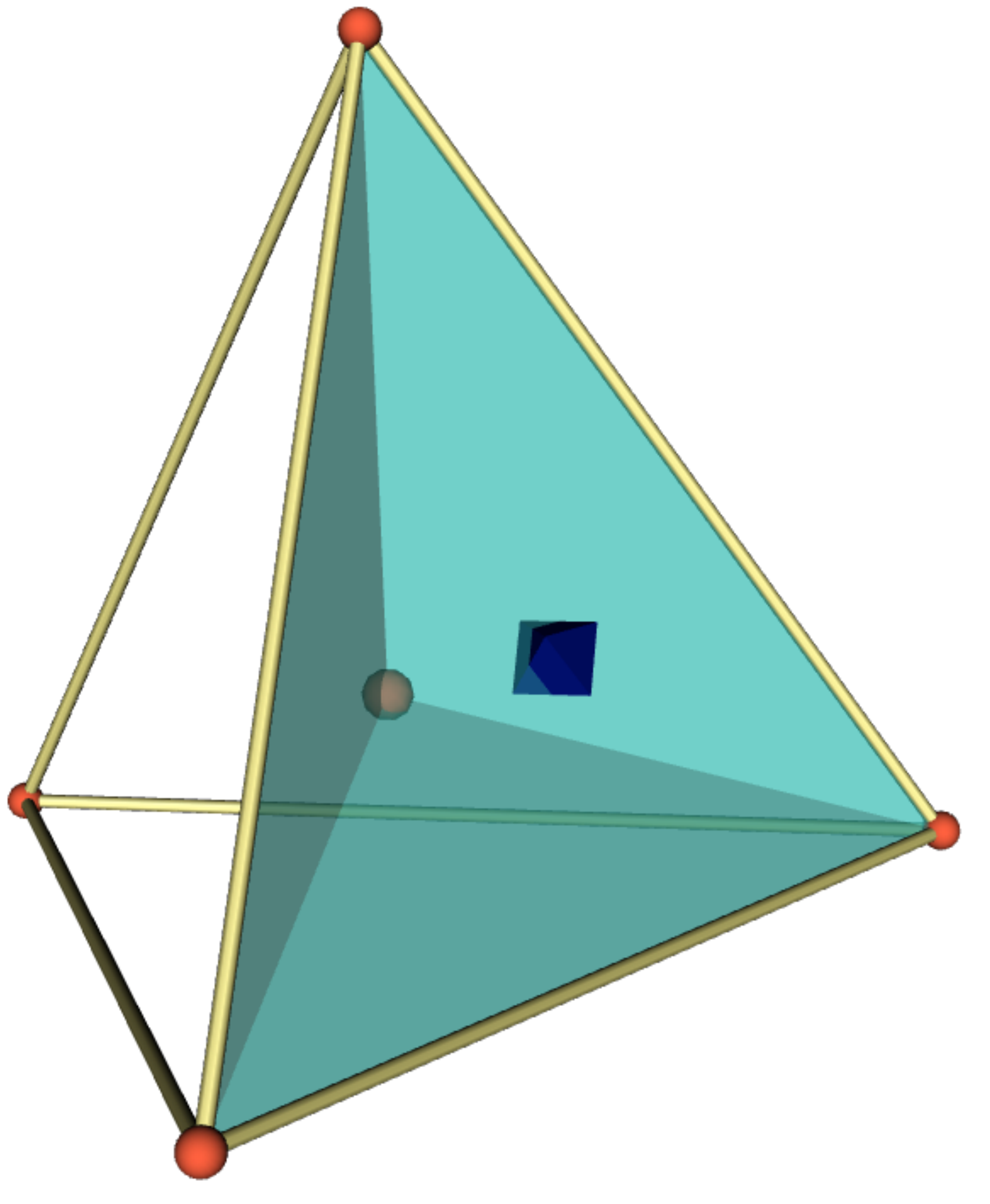}}}
				\put(12,-77){\normalsize{$V_{2}$}}
				\put(110,-33){\normalsize{$V_{3}$}}
				\put(29,100){\normalsize{$V_{1}$}}
				\put(-25,-12){\normalsize{$V_{4}$}}
				\put(25,0){\normalsize{$B$}}
				\put(58,15){\normalsize{$N_i$}}
				\end{picture}
				\vspace{2.5cm}
			\end{center}
		\end{figure}
	\end{minipage}
	\caption{Interior (left) and boundary (right) finite volumes of the face-type.}\label{vf3d}
\end{figure}

The notation employed is as follows:
\begin{itemize}
	\item Each interior node $N_i$ has as neighboring nodes the set ${\cal K}_{i}$
	consisting of the barycentres of the faces of the two tetrahedra of the initial mesh to which it belongs.
	\item The face $\Gamma_{ij}$ is the interface between cells $C_{i}$ and $C_{j}$. $N_{ij}$ is the barycentre of the face.
	\item The boundary of $C_{i}$ is denoted by
	$\Gamma_i =\displaystyle \bigcup_{N_j \in {\cal K}_{i}} \Gamma_{ij}$.
	\item Finally, $\widetilde{\boldsymbol{\eta}}_{ij} $ represents the outward unit normal vector to $\Gamma_{ij}$.
	We define $\boldsymbol{\eta_{ij}}:=\boldsymbol{\widetilde{\eta}_{ij}} ||\boldsymbol{\eta_{ij}} ||$, where,
	$||\boldsymbol{\eta_{ij}} ||:=\mbox{Area}(\Gamma_{ij})$.
\end{itemize}

% % % % % % % % % % % % % % % % % % % % % % % % % % % % % %
% % % % % % % % % % % % % % % % % % % % % % % % % % % % % %
%           FINITE VOLUME DISCRETIZATION                  %
% % % % % % % % % % % % % % % % % % % % % % % % % % % % % %
% % % % % % % % % % % % % % % % % % % % % % % % % % % % % %
\subsection{Finite volume discretization}
The discrete approximation of the conservative variables is taken to
be constant per finite volume, as it represents an integral average.
By integrating \eqref{eq:tidenincrec}, \eqref{eq:k_discret}, \eqref{eq:epsilon_discret}
and \eqref{eq:especies_discret}, on $C_{i}$ and applying the Gauss Theorem, we get
\begin{gather}{
\displaystyle \frac{1}{\Delta t}\left( \widetilde{\mathbf{W}}_{\mathbf{u},\, i}^{n+1} - \mathbf{W}_{\mathbf{u},\ i}^{n}\right)  + \frac{1}{\left| C_{i} \right|} \displaystyle \int_{\Gamma_i}\mathcal{F}^{\mathbf{w}_{\mathbf{u}}}\left({\mathbf{W}}^n_{\mathbf{u}}\right){\boldsymbol{\tilde{\eta}}_{i}}\mathrm{dS} 
 + \frac{1}{\left| C_{i} \right|} \displaystyle \int_{C_i}\nabla\pi^n \mathrm{dV}  -\frac{1}{\left| C_{i} \right|} \displaystyle\int_{\Gamma_i}\left(\tau^{n}\right)\boldsymbol{\tilde{\eta}}_{i} \mathrm{dS}  =\frac{1}{\left| C_{i} \right|} \int_{C_i} {\mathbf{f}^n_{\mathbf{u}}}\mathrm{dV}, \label{eq:wu_discret}}\\[0.4cm]{
\displaystyle \frac{1}{\Delta t}\left( \widetilde{W}_{k,\, i}^{n+1}-W_{k,\, i}^n\right) +\frac{1}{\left| C_{i} \right|}\displaystyle \int_{\Gamma_i}\mathcal{F}^{w_{k}}\left(W_{k}^n,\mathbf{U}^{n} \right)\boldsymbol{\tilde{\eta}}_{i}\mathrm{dS} 
\displaystyle -\frac{1}{\left| C_{i} \right|}\int_{\Gamma_i}\left[ \left( \mu+\frac{\mu_t^{n}}{\sigma_k} \right)\nabla \frac{W_k^n}{\rho} \right]\boldsymbol{\tilde{\eta}}_{i}\mathrm{dS}=0,}\\[0.4cm]{
\displaystyle \frac{1}{\Delta t}\left( \widetilde{W}_{\varepsilon,\, i}^{n+1}-W_{\varepsilon,\, i}^n\right) +\frac{1}{\left| C_{i} \right|}\displaystyle \int_{\Gamma_i}\mathcal{F}^{w_{\varepsilon}}\left(W_{\varepsilon}^n,\mathbf{U}^{n} \right)\boldsymbol{\tilde{\eta}}_{i}\mathrm{dS}
\displaystyle -\frac{1}{\left| C_{i} \right|}\int_{\Gamma_i}\left[ \left( \mu+\frac{\mu_t^{n}}{\sigma_{\varepsilon}} \right)\nabla \frac{W_{\varepsilon}^n}{\rho} \right]\boldsymbol{\tilde{\eta}}_{i}\mathrm{dS}=0,}\\[0.4cm]{
\displaystyle \frac{1}{\Delta t}\left( \widetilde{\mathbf{W}}_{\mathbf{y},\, i}^{n+1} - \mathbf{W}_{\mathbf{y},\, i}^n\right)  + \frac{1}{\left| C_{i} \right|} \displaystyle \int_{\Gamma_i} \mathcal{F}^{\mathbf{w}_{\mathbf{y}}}\left(\mathbf{W}_{\mathbf{y}}^n,\mathbf{U}^{n} \right)\boldsymbol{\tilde{\eta}}_{i}\mathrm{dS}  
- \frac{1}{\left| C_{i} \right|} \displaystyle\int_{\Gamma_i}  \left[ \left( \rho\mathcal{D}+\displaystyle \frac{\mu_t^{n}}{Sc_t} \right)\nabla  \left( \frac{1}{\rho}\mathbf{W}_{\mathbf{y}}^n\right) \right]\boldsymbol{\tilde{\eta}}_{i}\mathrm{dS} = 0,\label{eq:y_discret}}
\end{gather}
where $\left| C_{i} \right|$ denotes the volume of $C_{i}$ { and
$\boldsymbol{ \widetilde{\eta}}_{i}$ is the outward unit normal of $\Gamma_{i}$ at each point}.
Within the following sections we will detail how to approximate the former integrals.

% % % % % % % % % % % % % % % % % % % % % % % % % % % % % %
% % % % % % % % % % % % % % % % % % % % % % % % % % % % % %
%               Numerical flux                            %
% % % % % % % % % % % % % % % % % % % % % % % % % % % % % %
% % % % % % % % % % % % % % % % % % % % % % % % % % % % % %
\subsection{Numerical flux}
We define the global normal flux  on ${\Gamma}_{i}$ as
${\cal{Z} }(\mathbf{W}^{n},{  {\boldsymbol{\tilde{\eta}}}_{i}} ): ={
\mathbf{\cal{F} }} (\mathbf{W}^{n}) { {\boldsymbol{\tilde{\eta}}}_{i}}$.
Thanks to the shape  of the convective terms in equations \eqref{eq:wu_discret}-\eqref{eq:y_discret}
their integrals can be computed globally. We first split
$\Gamma_{i}$ into the cell interfaces $\Gamma_{ij}$, namely
\begin{equation}
\displaystyle  \int_{ \Gamma_{i} } \mathbf{\cal{F}}(
\mathbf{W}^{n}) { \boldsymbol{ \widetilde{\eta}}_{i}} \,\mathrm{dS}
=\displaystyle \sum_{N_j \in {\cal K}_{i}} \displaystyle
\int_{\Gamma_{ij}} {
	\cal{Z} }(\mathbf{W}^{n},{ {\boldsymbol{\tilde{\eta}}}_{ij}} )  \,\mathrm{dS}.
\end{equation}
Then, in order to obtain a stable discretization, the integral on
$\Gamma_{ij}$ is approximated by an upwind scheme using a numerical
flux function $\phi$:
\begin{equation}
\displaystyle \int_{\Gamma_{ij}}{
	\cal{Z} }(\mathbf{W}^{n},{ {\boldsymbol{\tilde{\eta}}}_{ij}} ) \mathrm{dS}
\approx  \phi \left(
\mathbf{W}_{i}^{n},\mathbf{W}_{j}^{n},\boldsymbol{
	\eta}_{ij}                   \right).
\end{equation}
The expression of $\phi$ depends on the upwind scheme. In this paper,
we consider the Rusanov scheme (see \cite{Rus62}){:}
\begin{gather}
\phi \left(
\mathbf{W}_{i}^{n},\mathbf{W}_{j}^{n},\boldsymbol{
	\eta}_{ij}                   \right)
= \displaystyle\frac{1}{2} ({\cal Z}
(\mathbf{W}_i^n,\boldsymbol{\eta}_{ij})+{\cal Z}(\mathbf{W}_j^n,\boldsymbol{\eta}_{ij}))\nonumber\\
-\frac{1}{2}
\alpha_{RS}(\mathbf{W}_i^n,\mathbf{W}_j^n,\boldsymbol{\eta}_{ij}) \left( \mathbf{W}_j^n-\mathbf{W}_i^n\right) ,
\end{gather}
where the coefficient $\alpha_{RS}$ can be computed in a coupled way
for all the equations, so that, it is defined by
\begin{equation}
\alpha_{RS}(\mathbf{W}_i^n,\mathbf{W}_j^n,\boldsymbol{\eta}_{ij}):=
\max \left\{2\left| \mathbf{U}_i^n\cdot {\boldsymbol{\eta}_{ij}} \right|,  2\left|
\mathbf{U}_j^n \cdot {\boldsymbol{\eta}_{ij}}\right| \right\}\label{eq:alphars_coupled}
\end{equation}
or, we can consider 
\begin{equation}\alpha^{\mathbf{w} _{\mathbf{u} }}_{RS}(\mathbf{W}_i^n,\mathbf{W}_j^n,\boldsymbol{\eta}_{ij})
:=\max \left\lbrace 2\left|\mathbf{U}_{i} \cdot{\boldsymbol{\eta}_{ij}}\right|,2\left|\mathbf{U}_{j}
\cdot{\boldsymbol{\eta}_{ij}}\right|\right\rbrace \label{eq:alphars_decoupled1}\end{equation}
for the momentum equation and
\begin{equation}
\hat{\alpha}_{RS}(\mathbf{W}_i^n,\mathbf{W}_j^n,\boldsymbol{\eta}_{ij}):=
\max \left\lbrace \left|\mathbf{U}_{i} \cdot{\boldsymbol{\eta}_{ij}}\right|,\left|\mathbf{U}_{j}
\cdot{\boldsymbol{\eta}_{ij}}\right|\right\rbrace\label{eq:alphars_decoupled2}\end{equation}
for the remaining equations.

% % % % % % % % % % % % % % % % % % % % % % % % % % % % %
%                  SECOND ORDER
% % % % % % % % % % % % % % % % % % % % % % % % % % % % %
Directly using the value obtained for the conservative variables at each node at the previous time step,
Rusanov scheme is first order in space and time. Two different methodologies will be introduced in order
to obtain second order schemes: the Kolgan-type scheme and the LADER scheme.

% % % % % % % % % % % % % % % % % % % % % % % % % % % % %
%                  Kolgan
% % % % % % % % % % % % % % % % % % % % % % % % % % % % %
\subsubsection{Kolgan-type scheme}
Kolgan, \cite{Kol11}, introduced for the first time a non linear high order scheme that circumvents
Godunov's theorem.  In order to do that, he proposed the use of limited slopes in the reconstruction
of the conservative values used to build the flux function.
Following this work the CVC Kolgan-type scheme was introduced in \cite{CVC10} and \cite{CV12} for
the shallow water equations. The new scheme, which is second order accuracy in space and first order
in time, can be extended to the resolution of the Navier-stokes equations. This method is based on
the idea of replacing the conservative values $\mathbf{W}_{i}^{n}$, $\mathbf{W}_{j}^{n}$ in the
numerical viscosity by improved interpolations given by $\mathbf{W}_{iL}^n$, $\mathbf{W}_{jR}^n$
at both sides of each face $\Gamma_{ij}$,
\begin{equation*}
\mathbf{W}_{iL}^n=\mathbf{W}_{i}^n+\Delta^{ijL},\quad \mathbf{W}_{jR}^n=\mathbf{W}_{j}^n-\Delta^{ijR},
\end{equation*}
where $\Delta^{ijL}$ and $\Delta^{ijR}$ are the left and right limited slopes at the face defined
through the Galerkin gradients computed on the upwind tetrahedra, $T_{ijL} $ and $T_{ijR} $,
respectively (see Figure \ref{fig:up_g_t} for the 2D case). Moreover, we avoid spurious oscillations
by taking into account a minmod-type limiter (see \cite{Kol11}):
\begin{eqnarray}
&\;& \Delta^{ijL}=Lim\left( 
\frac{1}{2} \left( 
\nabla {\mathbf{W}^n }
\right)_{ T_{ijL} }  \overline{ N_{i} N_{j}}\,,\mathbf{W}_{j}^n-\mathbf{W}_{i}^n 
\right) ,\\
&\;& \Delta^{ijR}=Lim\left( 
\frac{1}{2} \left( \nabla {\mathbf{W}^n } \right)_{T_{ijR}}  \overline{ N_{i} N_{j}}\,,\mathbf{W}_{j}^n-\mathbf{W}_{i}^n
\right).
\end{eqnarray}
{ Following \cite{CV12},}
this high-order extrapolation is used only in the upwind contribution of the numerical flux
retaining the conservative variables in the centred part. So that, the numerical flux reads
\begin{gather}
\phi \left( \mathbf{W}_i^n,\mathbf{W}_j^n,\mathbf{W}_{iL}^n,\mathbf{W}_{jR}^n,\boldsymbol{\eta}_{ij} \right) \nonumber\\
= \displaystyle \frac{1}{2} \left( {\cal Z}(\mathbf{W}_i^n,\boldsymbol{\eta}_{ij})+{\cal Z}(\mathbf{W}_j^n,\boldsymbol{\eta}_{ij})\right)  
-\displaystyle\frac{1}{2} \alpha_{RS} \left( (\mathbf{W}_{iL}^n,\mathbf{W}_{jR}^n,\boldsymbol{\eta}_{ij} \right) \left( \mathbf{W}_{jR}^n-\mathbf{W}_{iL}^n\right).
\end{gather}
\begin{figure}
	\centering
	\includegraphics[width=0.33\linewidth]{./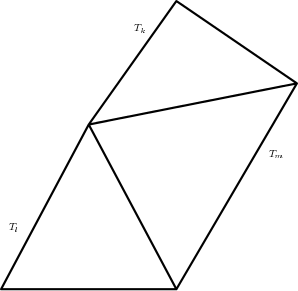}\hspace{-0.13cm}
	\includegraphics[width=0.33\linewidth]{./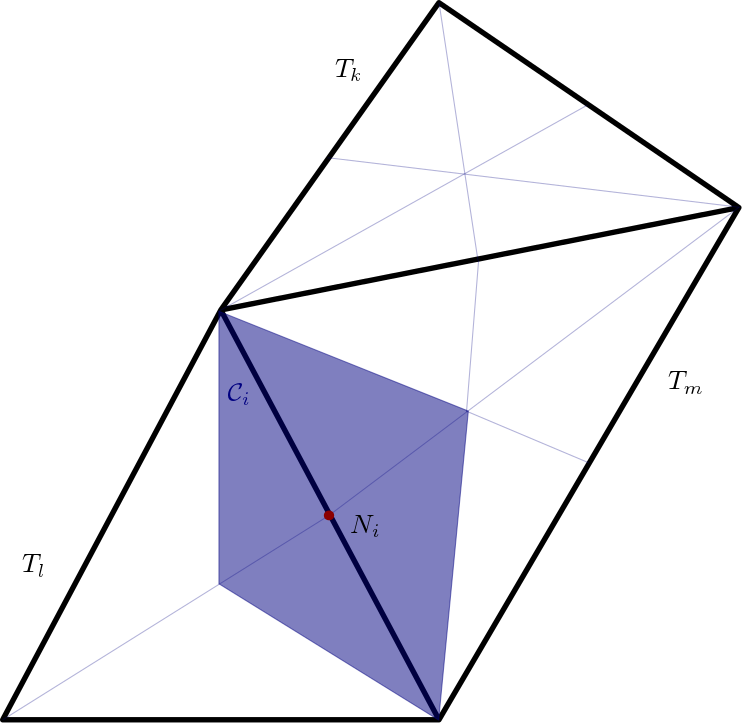}\hspace{-0.13cm}
	\includegraphics[width=0.33\linewidth]{./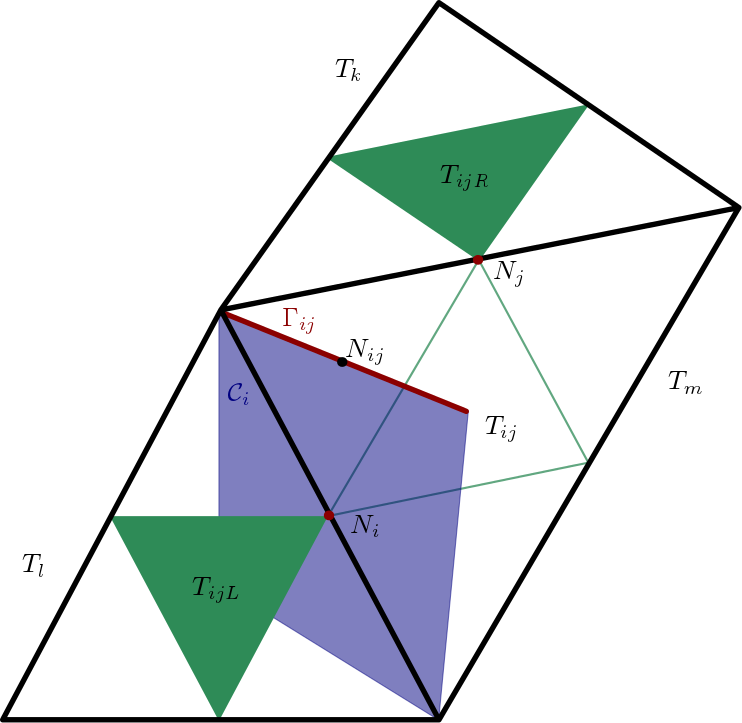}
	{\caption{Construction of a dual 2D mesh and auxiliary triangles.
			Left: Finite elements of the original triangular mesh (black).
			Centre: Finite volume $C_{i}$ (purple).
			Right: Upwind triangles (green).}\label{fig:up_g_t}}
\end{figure}
% % % % % % % % % % % % % % % % % % % % % % % % % % % % %
%                  Local ADER
% % % % % % % % % % % % % % % % % % % % % % % % % % % % %
\subsubsection{LADER}\label{sec:LocalADER}
ADER methodology, first put forward in \cite{TMN01} for the linear advection equation on
Cartesian meshes and extended, for the one-dimensional case, in \cite{BTVC16} to account
for the diffusion and reaction terms, is applied in order to obtain a second order in time
and space scheme. More precisely, a modification of the method is proposed to profit from
the dual mesh structure to reduce the size of the stencil and, hence, the computational cost. 
Due to the small neighbourhood involved on the calculations related to each node, we will name
this new scheme as LADER. The LADER scheme for the one-dimensional advection-diffusion-reaction
equation is presented in  \ref{sec:appendix}; the stability and truncation error analysis are also given.

To extend LADER to the three dimensional case four relevant issues must be taken into account:
\begin{enumerate}
	\item\label{enu:uno} The advection depends on the diffusion terms. That is, within { the}
	computation of the flux we will use Cauchy-Kovalevskaya procedure to obtain evolved values for the
	variables which contain information from both terms. \item\label{enu:dos} The evolved variables
	obtained for computing the diffusion term neglect the presence of the advection term.
	\item As a consecuence of \ref{enu:uno} and \ref{enu:dos}, advection and diffusion terms need to be
	computed using the proper evolved variables, which will be different for each of them.
	\item To compute the gradients needed to obtain the evolved variables we can profit from the
	FE mesh and use a Galerkin approach.
\end{enumerate}

{ In this section, for ease of comprehension,} we will assume that the diffusion term
only accounts for the velocity gradient, the remaining terms can be computed analogously.
Let us consider $W$ an approximation of a scalar conservative variable and $\alpha$ the related
diffusion coefficient, the proposed method includes the following steps:
\begin{description}
	\item[ Step 1.] Data reconstruction.
	Reconstruction of the data in terms of first degree polynomials is considered. 
	At each finite volume we define four polynomials each of them at the neighbourhood
	of one of the boundary faces. 
	Focusing on a face $\Gamma_{ij}$ its two related reconstruction polynomials are
	\begin{equation}p^{i}_{ij}(N)=W_i+(N-N_{i})\left( \nabla W\right)^{i}_{ij},\quad p^{j}_{ij}(N)=W_j+(N-N_{j})\left( \nabla W\right)^{j}_{ij}.\end{equation}
	A possible election of the gradients is
	\begin{equation}\left( \nabla W\right)^{i}_{ij}= \nabla  W_{ T_{ijL} },\quad \left( \nabla W\right)^{j}_{ij}= \nabla  W_{ T_{ijR} }.\end{equation}
	which will result on a linear reconstruction as it is based on a fixed stencil. 
	
	In order to circumvent Godunov's theorem and prevent spurious oscillations,
	a non-linear reconstruction is considered (see \cite{Toro}). More precisely,
	the ENO (Essentially Non-Oscillatory) interpolation method is applied.
	The slopes are adaptively chosen as follows:
	\begin{gather*}
	\left( \nabla W\right)^{i}_{ij}= \left\lbrace
	\begin{array}{lr}
	\left(\nabla W \right)_{T_{ijL}} & \textrm{if }\left| \left(\nabla W \right)_{ T_{ijL}}\cdot \left(N_{ij}-N_{i}\right)\right| \leq \left|\left(\nabla W \right)_{ T_{ij}}\cdot \left(N_{ij}-N_{i}\right) \right|,\\[8pt]
	\left(\nabla W \right)_{ T_{ij}} & \textrm{if }\left| \left(\nabla W \right)_{ T_{ijL}}\cdot \left(N_{ij}-N_{i}\right)\right| > \left| \left(\nabla W \right)_{ T_{ij}}\cdot \left(N_{ij}-N_{i}\right) \right|;
	\end{array}
	\right.
	\end{gather*}
	\begin{gather*}
	\left( \nabla W \right)^{j}_{ij}= \left\lbrace
	\begin{array}{lr}
	\left(\nabla W\right)_{ T_{ijR}} & \textrm{if }\left|\left( \nabla W \right)_{ T_{ijR}}\cdot \left(N_{ij}-N_{j}\right)\right| \leq \left|\left(\nabla W \right)_{ T_{ij}}\cdot \left(N_{ij}-N_{j}\right)\right|,\\[8pt]
	\left(\nabla W\right)_{ T_{ij}}  & \textrm{if }\left| \left(\nabla W \right)_{ T_{ijR}}\cdot \left(N_{ij}-N_{j}\right)\right| > \left|\left( \nabla W\right) _{ T_{ij}}\cdot \left(N_{ij}-N_{j}\right) \right|;
	\end{array}
	\right.
	\end{gather*}
	where $\left( \nabla W\right)_{ T_{ij}} $ is the gradient of the velocity
	at the auxiliary tetrahedra which intersects the face.	
	\item[Step 2.] Computation of boundary extrapolated values at the barycenter of the faces, $N_{ij}$:
	\begin{gather} W_{i\, N_{ij}} =p^{i}_{ij}(N_{ij})
	= W_{i}+(N_{ij}-N_{i})   \left( \nabla W\right)^{i}_{ij},\\
	W_{j\, N_{ij}}=p^{j}_{ij}(N_{ij})
	W_{j}+(N_{ij}-N_{j})   \left( \nabla W\right)^{j}_{ij}.  \end{gather}
	
	\item[Step 3.] 
	Computation of the flux terms with second order of accuracy using the mid-point rule.
	Taylor series expansion in time$\,$ and$\,$ Cauchy$\,$-$\,$Kovalevskaya  procedure
	are applied to locally approximate the conservative variables at time
	$\frac{\Delta t}{2}$. This methodology accounts for the contribution of the advection
	and diffusion terms to the time evolution of the flux term.
	The resulting evolved variables read
	\begin{gather}\overline{W_{i\, N_{ij}}}=W_{i\, N_{ij}}-\frac{\Delta t}{2\mathcal{L}_{ij}}\left( \mathcal{Z}(W_{i\, N_{ij}},\boldsymbol{\eta}_{ij}  )+\mathcal{Z}(W_{j\, N_{ij}},\boldsymbol{\eta}_{ij}  )\right) \notag\\ +
	\frac{\Delta t}{2\mathcal{L}_{ij}^2} \left(   \alpha_{iN_{ij}} \left( \nabla W\right)^{i}_{ij}\boldsymbol{\eta}_{ij}  + \alpha_{j N_{ij}}  \left( \nabla W\right)^{j}_{ij}\boldsymbol{\eta}_{ij} \right), \\
	\overline{ W_{j\, N_{ij}}}= W_{j\, N_{ij}}-\frac{\Delta t}{2\mathcal{L}_{ij}}\left( \mathcal{Z}( W_{i\, N_{ij}},\boldsymbol{\eta}_{ij}  )+\mathcal{Z}( W_{j\, N_{ij}},\boldsymbol{\eta}_{ij}  )\right)\notag \\+
	\frac{\Delta t}{2\mathcal{L}_{ij}^2} \left(   \alpha_{iN_{ij}}  \left( \nabla W\right)^{i}_{ij}\boldsymbol{\eta}_{ij}  + \alpha_{j N_{ij}}  \left( \nabla W\right)^{j}_{ij} \boldsymbol{\eta}_{ij} \right) .\end{gather}
	We have denoted $\mathcal{L}_{ij}=\min\left\lbrace\frac{{\left| C_{i} \right|}}{\mathrm{S}(C_i)},\frac{{\left| C_{j} \right|}}{\mathrm{S}(C_j)}\right\rbrace$ with $\mathrm{S}(C_i)$ the area of the surface of cell $C_i$.
	Two different options will be considered in the scheme concerning the evolved variables.
	The first one corresponds to the previous definition of the evolved variables.
	Meanwhile, the second one neglects the evolution of the diffusion term.
	
	\item[Step 4.] Computation of the numerical flux considering Rusanov scheme.
	\begin{gather}
	\phi\left(\overline{ W^n_{i\, N_{ij}}},\overline{ W^n_{j\, N_{ij}}},\boldsymbol{\eta}_{ij}\right)=
	\frac{1}{2}\left(\mathcal{Z}\left(\overline{ W^n_{i\, N_{ij}}},\boldsymbol{\eta}_{ij}\right)+\mathcal{Z}\left(\overline{ W^n_{j\, N_{ij}}},\boldsymbol{\eta}_{ij}\right)\right) \nonumber\\
	-\frac{1}{2}\alpha_{RS}\left(\overline{ W^n_{i\, N_{ij}}},\overline{ W^n_{j\, N_{ij}}},\boldsymbol{\eta}_{ij}\right)\left(\overline{ W^n_{j\, N_{ij}}}-\overline{ W^n_{i\, N_{ij}}}\right).
	\end{gather}
\end{description}

% % % % % % % % % % % % % % % % % % % % % % % % % % % % % %
% % % % % % % % % % % % % % % % % % % % % % % % % % % % % %
%            VISCOUS TERMS                                %
% % % % % % % % % % % % % % % % % % % % % % % % % % % % % %
% % % % % % % % % % % % % % % % % % % % % % % % % % % % % %
\subsection{Viscous terms}
We come next to describe the computation of the integrals involving the viscous terms.
First, applying Gauss' theorem we relate the volume integral of the diffusion term with
a surface integral over the boundary, $\Gamma_{i}$. Next, this integral is split into the integrals over the cell interfaces
$\Gamma_{ij}$. Thus, the viscous term of the momentum conservation equation reads 
\begin{gather}
\int_{C_i}\mathrm{div }\tau^n\mathrm{dV} = \sum_{N_j\in\mathcal{K}_i}\int_{\Gamma_{ij}} \tau^n\tilde{\boldsymbol{\eta}}_{ij} \, \mathrm{dS} \nonumber\\
= \sum_{N_j\in\mathcal{K}_i}\int_{\Gamma_{ij}}\left[
\left( \mu+ \mu_t^n\right) \left(\nabla \mathbf{U}^{n}+\left( \nabla\mathbf{U}^{n}\right)^T\right)   - \frac{2}{3}\rho K^n I\right]{\boldsymbol{\widetilde{\eta}}_{ij}} \,\mathrm{dS}.\label{eq:visc_term_int}
\end{gather}
Two different approaches can be considered in order to compute the above integral.

On the one hand, decomposition with semi-implicit and explicit discretizations can
be applied when using the CVC Kolgan-type scheme. This methodology splits the diffusion
flux into its orthogonal and non-orthogonal parts and relax the stability condition
on the time step size (see \cite{BFSV14} for further details).

On the other hand, the dual mesh ease the use of Galerkin approach to compute
the  derivatives involved in \eqref{eq:visc_term_int}. 
We introduce a numerical diffusion function $\varphi_{\mathbf{u}}$ such that
\begin{gather}\int_{\Gamma_{ij}}
\left( \mu+ \mu_t^n\right) \nabla \mathbf{U}^n\tilde{\boldsymbol{\eta}}_{ij}\mathrm{dS}
\approx \varphi_{\mathbf{u}}\left( \mathbf{U}_{i}^n,\mathbf{U}_{j}^n,\mu_{t,\, i}^n,\mu_{t,\, j}^n, \boldsymbol{\eta}_{ij} \right) \end{gather}
and we consider
\begin{gather}
\varphi_{\mathbf{u}}\left( \mathbf{U}_{i}^n,\mathbf{U}_{j}^n,\mu_{t,\, i}^n,\mu_{t,\, j}^n, \boldsymbol{\eta}_{ij} \right)
=\left( \mu+ \mu_{t,\, ij}^n\right) \left( \nabla \mathbf{U}^n\right)_{T_{ij}}\boldsymbol{\eta}_{ij}, \label{eq_psi_viscterm_grad} \end{gather}
with
\begin{gather}
\mu_{t,\,ij}^n=\frac{1}{2}\left(\mu_{t,\,i}^n+\mu_{t,\,j}^n  \right). \end{gather}
{ Since we know the value of the turbulent kinetic energy at the nodes
	of the finite volumes, we approximate the turbulent kinetic energy term as
	the average of the values obtained at the two nodes related to the face}
\begin{equation}
-\int_{\Gamma_{ij}}\frac{2}{3} W^n_k \tilde{\boldsymbol{\eta}}_{ij}\mathrm{dS} =
-\frac{1}{3} \left(W^n_{k, \,i}+W^n_{k,\, j}\right)\boldsymbol{\eta}_{ij}.
\end{equation}

Finally, the viscous terms for the remaining equations are obtained equally to the gradient term of the momentum equation:
\begin{eqnarray}
\int_{C_i}\frac{1}{\rho} \mathrm{div }\left( \boldsymbol{\mathcal{D}}^{n} \nabla \widehat{\mathbf{W}}^{n} \right)\mathrm{dV} = \frac{1}{\rho}\sum_{N_j\in\mathcal{K}_i}\int_{\Gamma_{ij}} \boldsymbol{\mathcal{D}}^{n} \nabla \widehat{\mathbf{W}}^{n}  \, \tilde{\boldsymbol{\eta}}_{ij} \, \mathrm{dS}, 
\end{eqnarray}
where
\begin{eqnarray}
\boldsymbol{\mathcal{D}}^{n} =  \left(  \begin{array}{cccc}\mathcal{D}_{k}^{n} & 0 & 0  \\
0 & \mathcal{D}_{\varepsilon}^{n} & 0 \\
0 & 0 & \mathcal{D}_{{ \mathbf{y}}}^{n}
\end{array} \right) 
=  \left(  \begin{array}{cccc}
\mu + \displaystyle \frac{\mu_t^{n}}{\sigma_k}  & 0 & 0 \\
0 & \mu + \displaystyle \frac{\mu_t^{n}}{\sigma_\varepsilon}  & 0  \\
0 & 0 & \rho\mathcal{D} + \displaystyle \frac{\mu_t^{n}}{Sc_t}
\end{array}  \! \right)\! \! .
\end{eqnarray} 
Thus, we can introduce the diffusion flux function, $\varphi_{\widehat{\mathbf{w}}}$, verifying
\begin{gather}
\int_{\Gamma_{ij}} \boldsymbol{\mathcal{D}}^{n} \nabla \widehat{\mathbf{W}}^{n} { \tilde{\boldsymbol{\eta}}_{ij}}  \mathrm{dS} \approx\varphi_{\widehat{\mathbf{w}}}\left( 
\widehat{\mathbf{W}}_{i}^n,\widehat{\mathbf{W}}_{j}^n,
\mu_{t,\, i}^n,\mu_{t,\, j}^n, \boldsymbol{\eta}_{ij}
\right), \notag\\
\varphi_{\widehat{\mathbf{w}}}\left( 
\widehat{\mathbf{W}}_{i}^n,\widehat{\mathbf{W}}_{j}^n,
\mu_{t,\, i}^n,\mu_{t,\, j}^n, \boldsymbol{\eta}_{ij}
\right)= \boldsymbol{\mathcal{D}}^{n}_{ij} \left( \nabla \widehat{\mathbf{W}^{n}} \right)_{T_{ij}} \boldsymbol{\eta}_{ij}\label{eq:dif_funct_what}
\end{gather}

The above methodologies are used to approximate the viscous terms when choosing
a first order method or the CVC Kolgan-type scheme to compute the advection term.
Nevertheless, the LADER methodology requires a special treatment.

\subsubsection{LADER approach: the viscous terms }
As was already introduced in Section \ref{sec:LocalADER}, to apply LADER and to
obtain a second order in space and time scheme, instead of computing the diffusion
flux functions, $\varphi_{\mathbf{u}}$ and $\varphi_{\widehat{\mathbf{W}}}$, with the
value of the variables at the previous time step, $\mathbf{U}^n$, $K^n$, $E^n$ and $\mathbf{Y}^n$,
its is necessary to use some evolved values, $\overline{\mathbf{U}^n}$, $\overline{K^n}$, $\overline{E^n}$ and $\overline{\mathbf{Y}^n}$. 

It is important to remark that the former evolved variables do not match the already
computed ones for the flux term { (see \ref{sec:appendix} for a detailed
	analysis of the scalar advection-diffusion-reaction equation)}.
Taylor series expansion in time and Cauchy-Kovalevskaya procedure are applied neglecting
the advection term { so that a second order in space and time scheme is attained}:
\begin{gather}
{ \overline{\mathbf{U}^n}=\mathbf{U}^n+ \frac{\Delta t}{2}\left\lbrace  \mathrm{div} \left[\left(\mu+\mu_t^n\right) \nabla\mathbf{U}^n -\frac{2}{3}\rho K^n\mathbf{I}\right]   \right\rbrace,}\label{eq:evol_u_dif}\\
\overline{K^n}=K^n+ \frac{\Delta t}{2}\left[ \left(\mu+ \frac{\mu_t^n}{\sigma_{k}}\right)\nabla K^{n} \right],\label{eq:evol_k_dif}\\
\overline{E^n}=E^n+ \frac{\Delta t}{2}\left[ \left(\mu+ \frac{\mu_t^n}{\sigma_{\varepsilon}}\right) \nabla E^{n}\right],\label{eq:evol_eps_dif}\\
\overline{\mathbf{Y}^n}=\mathbf{Y}^n+ \frac{\Delta t}{2}\left[ \left(\rho\mathcal{D}+ \frac{\mu_t^n}{Sc_t}\right) \nabla \mathbf{Y}^{n}\right].\label{eq:evol_y_dif}
\end{gather}

In what follows, we describe the computation of the evolved velocities at an arbitrary node $N_{i}$:
\begin{enumerate}
	\item The gradients of the original variables are computed at each auxiliary tetrahedra
	of the FE mesh, $T_{ij}$ (see, on the 2D representation in Figure \ref{fig:up_g_t}, 
	the triangle  with green contour). 	The value of the gradient at each node, $N_i$,
	is obtained as the average of the values on the two tetrahedra containing the node,
	$T_{ijL}$ (green filled triangle in Figure \ref{fig:up_g_t}) and $T_{ij}$.
	Taking into account the viscosity coefficients and the turbulent kinetic energy term,
	we introduce the auxiliary variable:
	\begin{equation}
	\mathbf{V}^{n}_{i}:=\left(\mu + \mu_{t,\,i}^{n}\right) \frac{1}{2} \left(\left( \nabla \mathbf{U}^{n}\right)_{ T_{ijL}} +\left( \nabla \mathbf{U}^{n}\right)_{ T_{ij}}\right)- \frac{2}{3}\rho K^{n}_{i} I.
	\end{equation}
	\item The divergence is computed as the average of the divergences of $\mathbf{V}^{n}$
	obtained on the auxiliary tetrahedra:
	\begin{equation}
	\overline{\mathbf{U}_i^{n}}=\mathbf{U}_i^{n} + \frac{\Delta t}{4}\mathrm{tr}\left(\left( \nabla \mathbf{V}^{n}\right)_{T_{ijL}}+\left( \nabla \mathbf{V}^{n}\right)_{T_{ij}}\right).
	\end{equation}
	\item The diffusion function $\varphi_{\mathbf{u}}$ is evaluated on the evolved variables:
	\begin{equation}
	\varphi_{\mathbf{u}}\left( \overline{\mathbf{U}_i^{n}},\overline{\mathbf{U}_j^{n}},\mu_{t,\, i}^n,\mu_{t,\, j}^n, \boldsymbol{\eta}_{ij} \right)
	=\left( \mu+ \mu_{t,\, {ij}}^n\right)\left( \nabla \overline{\mathbf{U}^n}\right)_{T_{ij}}\boldsymbol{\eta}_{ij}. \end{equation}
\end{enumerate}
The remaining evolved variables are similarly obtained. Hence the related diffusion function reads  
\begin{gather}
\varphi_{\widehat{\mathbf{w}}}\left( 
\overline{\widehat{\mathbf{W}}_{i}^n},\overline{\widehat{\mathbf{W}}_{j}^n},
\mu_{t,\, i}^n,\mu_{t,\, j}^n, \boldsymbol{\eta}_{ij}
\right)= \boldsymbol{\mathcal{D}}^{n}_{ij} \left( \nabla \overline{\widehat{\mathbf{W}}^{n}} \right)_{T_{ij}} \boldsymbol{\eta}_{ij}.
\end{gather}

% % % % % % % % % % % % % % % % % % % % % % % % % % % % % %
% % % % % % % % % % % % % % % % % % % % % % % % % % % % % %
%                     PRESSURE                            %
% % % % % % % % % % % % % % % % % % % % % % % % % % % % % %
% % % % % % % % % % % % % % % % % % % % % % % % % % % % % %
\subsection{Pressure term}
For the integral of the pressure gradient we follow \cite{BFSV14}.
We split the boundary $\Gamma_{i}$ into the cell interfaces
$\Gamma_{ij}$ using { Gauss' theorem} and we compute the
pressure as the arithmetic mean of its values at the three vertices
of face ${\Gamma}_{ij}$ and the barycentre of the tetrahedra to which
the face belongs. Then, the corresponding approximation
of the integral is given by
\begin{equation}
\int_{ {\Gamma}_{ij} } \pi^n \, \boldsymbol{\widetilde{\eta}}_{ij} \mathrm{dS} \approx
\left[\frac{5}{12} \left( \pi^n(V_1)+\pi^n(V_2)\right)+ \frac{1}{12} \left(\pi^n(V_3)+
\pi^n(V_4)  \right)\right]  \boldsymbol{\eta}_{ij}.
\end{equation}

% % % % % % % % % % % % % % % % % % % % % % % % % % % % % %
% % % % % % % % % % % % % % % % % % % % % % % % % % % % % %
%              Projection stage                           %
% % % % % % % % % % % % % % % % % % % % % % % % % % % % % %
% % % % % % % % % % % % % % % % % % % % % % % % % % % % % %
\subsection{Projection stage}
Within the projection stage, the pressure is computed using a standard
finite element method. The incremental projection method presented in
\cite{Guer06} is adapted to solve \eqref{eq:tideincre2}-\eqref{eq:qincrec}
obtaining the following weak problem:

{\textit{Find $\delta^{n+1}\in V_0:=\left\{ z\in H^1(\Omega): \int_{\Omega}z=0\right\}$ verifying }}
\begin{equation}
\displaystyle\int_{\Omega} \nabla \delta^{n+1}\cdot \nabla z  \, \mathrm{dV}= \displaystyle \frac{1}{\Delta t}
\int_{\Omega}\mathbf{\widetilde{W}}^{n+1} \cdot \nabla z  \, \mathrm{dV}  - \displaystyle \frac{1}{\Delta t} \int_{\partial \Omega} G^{n+1}z  \, \mathrm{dS}\quad \forall z\in V_0, 
\end{equation}
where $\delta^{n+1}:= \pi^{n+1}-\pi^{n}$ (see \cite{BFSV14} for further details).

% % % % % % % % % % % % % % % % % % % % % % % % % % % % % %
% % % % % % % % % % % % % % % % % % % % % % % % % % % % % %
%              Post-projection stage                      %
% % % % % % % % % % % % % % % % % % % % % % % % % % % % % %
% % % % % % % % % % % % % % % % % % % % % % % % % % % % % %
\subsection{Post-projection stage}
Once the pressure is computed, we can update $\mathbf{W}_{\mathbf{u}}^{n+1}$
with $\nabla\delta^{n+1}_i$, that is,
\begin{equation}
\mathbf{W}^{n+1}_{\mathbf{u},\, i}=\widetilde{\mathbf{W}}^{n+1}_{\mathbf{u},\, i}+\Delta t \nabla \delta^{n+1}_i.
\end{equation}
The previous computation of the updated velocities allows for an implicit
approach of the production term $G_k$ on the turbulence equations.
Meanwhile, for the dissipative terms a semi-implicit scheme is used:
\begin{gather}
\frac{W_{k,\, i}^{n+1}-\widetilde{W}_{k,\, i}^{n+1}}{\Delta t}+W^n_{\varepsilon,\, i}-G_{k,\, i}(\mathbf{U}^{n+1})=f_{k,\, i}^n,\\
\displaystyle\frac{W_{\varepsilon,\, i}^{n+1}-\widetilde{W}_{\varepsilon,\, i}^{n+1}}{\Delta t}+C_{2\varepsilon}\frac{W_{\varepsilon,\, i}^n }{W_{k,\, i}^n}W_{\varepsilon,\, i }^{n+1}  \displaystyle- C_{1\varepsilon}\frac{W^n_{\varepsilon,\, i}}{W^n_{k,\, i}}G_{k,\, i}(\mathbf{U}^{n+1})=f_{\varepsilon,\, i}^n
\end{gather} 
where the derivatives involved in the production term, $G_{k,\, i}(\mathbf{U}^{n+1})$,
are computed as the averaged of the auxiliary tetrahedra related to the node $N_{i}$.
Finally, the source terms $\mathbf{f}_{\widehat{\mathbf{w}}}$ are pointwise evaluated.

% % % % % % % % % % % % % % % % % % % % % % % % % % % % % %
% % % % % % % % % % % % % % % % % % % % % % % % % % % % % %
%              Boundary conditions                        %
% % % % % % % % % % % % % % % % % % % % % % % % % % % % % %
% % % % % % % % % % % % % % % % % % % % % % % % % % % % % %
\subsection{Boundary conditions}
The boundary conditions were defined following \cite{BFSV14}:
\begin{itemize}
	\item Dirichlet boundary conditions for inviscid fluids: the normal component of
	the conservative variable is set at the boundary nodes.
	\item Dirichlet boundary conditions for viscous fluids: the value of the
	conservative variable is imposed at the boundary nodes.
	\item Neumann boundary conditions: the definition of $\widetilde{\mathbf{W}}^{n+1}$
	takes into account the inflow/outflow boundary condition with no need for any additional treatment. 
\end{itemize}
Moreover, in the manufactured tests designed to analyse the order of accuracy
of the numerical discretizations, it is a usual practice to impose the values
of the exact solution at the boundary nodes. This practice avoids that the
accuracy of the method can be affected by the treatment of the boundary
conditions. From the mathematical point of view, it is like considering
Dirichlet boundary conditions.

% % % % % % % % % % % % % % % % % % % % % % % % % % % % % %
% % % % % % % % % % % % % % % % % % % % % % % % % % % % % %
%              Numerical results
% % % % % % % % % % % % % % % % % % % % % % % % % % % % % %
% % % % % % % % % % % % % % % % % % % % % % % % % % % % % %
\section{Numerical results} \label{sec:numer_res}
In this section, we present the results obtained for several test problems.
In order to define the time step, two different options are implemented in the code.
On the one hand, we can simply introduce a fixed time step. On the other hand,
we can provide the CFL from which the code will compute the time step at each
{ time} iteration. { The latest option is the one chosen to
	run the test cases presented in this paper. Therefore, to determine the time
	step at each time iteration, we compute a local value for the time step at each cell $C_{i}$,
\begin{equation}
	\Delta t_{C_{i}}=  \frac{\CFL\, \mathcal{L}_{i}^2}{2\left|\mathbf{U}_{i} \right| \mathcal{L}_{i}
		+ \max \left\lbrace \mu+\mu_{t,\, i}, \rho\mathcal{D}+\dfrac{\mu_{t,\, i}}{Sc_{t}} \right\rbrace }
\end{equation}
with $ \mathcal{L}_{i}:=\frac{{\left| C_{i} \right|} }{\mathrm{S} \left(C_{i}\right)}$.
Finally, as global time step at each time iteration, $\Delta t$,  we choose the minimum time steps obtained at each cell.

\begin{remark2}
The above definition of $\Delta t_{C_{i}}$ is valid if the transport of species equation is solved,
otherwise its value is given by
\begin{equation}
\Delta t_{C_{i}}=  \frac{\CFL \, \mathcal{L}_{i}^2}{2 \left|\mathbf{U}_{i} \right| \mathcal{L}_{i} + \mu+\mu_{t,\, i}  }.
\end{equation}
\end{remark2}
}

% % % % % % % % % % % % % % % % % % % % % % % % % % % % % %
% % % % % % % % % % % % % % % % % % % % % % % % % % % % % %
\subsection{Manufactured test 1. Laminar flow}\label{sec:pb_mms_laminar}
The first test to be posed was obtained using the method of the manufactured solutions (MMS).
We consider the domain $\Omega=\left[0,1\right]^3$ and we assume the flow being defined by
\begin{gather}
\rho =	1,\\[6pt]
\pi(x,y,z,t) =	\cos(\pi t (x+y+z)),\\[6pt]
\mathbf{u}(x,y,z,t) =\left(  \sin(\pi y t)\cos(\pi z t),\;-\cos(\pi z^3 t),\;	\exp(-2\pi x t^2)\right)^T,
\end{gather}
with $\mu=10^{-2}$. { The related source terms are included in \ref{sec:appendix_sourceterms}.}

To perform the error and order of accuracy analysis we employ the three uniform meshes with different
cell sizes presented in Table \ref{tab:mmstest_mesh}.
\begin{table}[h]\begin{center}
	\begin{tabular}{|c||c|c|c|c|c|c|}
		\hline Mesh  & $N$ & Elements & Vertices & Nodes & $V_h^m$ (m$^3$) & $V_h^M$ (m$^3$) \\\hline
		\hline $M_1$ & $4$ & $384$ & $ 125 $ & $ 864 $ & $ 6.51E-04 $ & $ 1.30E-03 $ \\
		\hline $M_2$ & $8$ & $ 3072 $ & $ 729 $ & $ 6528 $ & $ 8.14E-05 $ & $ 1.63E-04 $ \\
		\hline $M_3$ & $16$ & $ 24576 $ & $ 4913 $ & $ 50688 $ & $ 1.02E-05 $ & $ 2.03E-05 $ \\
		\hline
	\end{tabular}
	\caption{Manufactured test 1. Laminar flow.  Mesh features. }\label{tab:mmstest_mesh}
	\end{center}
\end{table}
We have denoted $N+1$ the number of points along the edges, $h=1/N$, $V_h^m$ the minimum volume
of the finite volumes and $V_h^M$ the maximum volume of the finite volumes. 

Four different methods are used to solve the problem: the first order method presented in \cite{BFSV14},
CVC method with an orthogonal decomposition of the diffusion term (CVC-orth), CVC method combined with
a Galerkin approach for the diffusion term (CVC-G) and LADER. The errors and orders, depicted in Table
\ref{mms_laminar_err_ord}, were computed as follows:
\begin{gather}
E(\pi)_{M_i} = \left\|\pi-\pi_{M_i} \right\|_{l^2(L^2(\Omega))}\quad E(\mathbf{w}_{\mathbf{u}})_{M_i}
= \left\|\mathbf{w}_{\mathbf{u}}-\mathbf{w}_{\mathbf{u}\, M_i} \right\|_{l^2(L^2(\Omega)^{3})},
\end{gather}
\begin{gather}
o_{\pi_{M_i/M_j}} = \frac{\log\left( E(\pi)_{M_i}/E(\pi)_{M_j}\right) }{\log\left( h_{M_i}/h_{M_j}\right) },\quad
o_{\mathbf{w}_{\mathbf{u}\, M_i/M_j}} = \frac{\log\left( E(\mathbf{w}_{\mathbf{u}})_{ M_i}/E(\mathbf{w}_\mathbf{u})_{M_j}\right) }{\log\left( h_{M_i}/h_{M_j}\right) }.\end{gather}
\begin{table}
	\begin{tabular}{|c|c||c|c|c||c|c|}
		\hline Method & Variable & $E_{M_1}$ & $E_{M_2}$ & $E_{M_3}$ & $o_{M_1/M_2}$ & $o_{M_2/M_3}$ \\\hline\hline
		\multirow{2}{*}{Order 1}      &$\pi$               & $1.24E-01$ & $5.70E-02$ & $2.96E-02$ & $1.12$ & $0.95$ \\
		&$\mathbf{w}_{\mathbf{u}}$         & $6.40E-02$ & $3.32E-02$ & $1.78E-02$ & $0.95$ & $0.90$ \\\hline
		\multirow{2}{*}{CVC-orth.}  &$\pi$               & $6.30E-02$ & $1.91E-02$ & $8.84E-03$ & $1.72$ & $1.11$ \\
		&$\mathbf{w_{u}}$         & $5.51E-02$ & $2.06E-02$ & $8.95E-03$ & $1.42$ & $1.20$ \\\hline
		\multirow{2}{*}{CVC-G}      &$\pi$               & $5.98E-02$ & $1.58E-02$ & $4.58E-03$ & $1.92$ & $1.78$ \\
		&$\mathbf{w_{u}}$         & $5.41E-02$ & $1.88E-02$ & $6.52E-03$ & $1.52$ & $1.53$ \\\hline
		\multirow{2}{*}{LADER}   &$\pi$               & $4.10E-02$ & $8.74E-03$ & $2.03E-03$ & $2.23$ & $2.11$ \\
		&$\mathbf{w_{u}}$         & $2.61E-02$ & $5.76E-03$ & $1.24E-03$ & $2.18$ & $2.22$ \\\hline
	\end{tabular}
	\caption{ Manufactured test1. Laminar flow.  Observed errors and convergence rates. $\CFL=1$.}\label{mms_laminar_err_ord}
\end{table}
We can observe that CVC-G method provides an order of convergence close to two.
This is in accordance with the theoretical order of this scheme, first order in
time and second order in space, and the high time-dependency of the solution.
Whereas, with LADER the expected second order of accuracy is achieved. 

% % % % % % % % % % % % % % % % % % % % % % % % % % % % % %
% % % % % % % % % % % % % % % % % % % % % % % % % % % % % %
\subsection{Manufactured test 2. Turbulent flow with species transport}\label{sec:manufacturedtest2}
The second academic test to be considered is a modification of Test 1
to account for the turbulence and species transport equations.
Let us define the flow as
\begin{gather}
\rho =	1,\\
\pi(x,y,z,t) =	\cos(\pi t (x+y+z)),\\
\mathbf{u}(x,y,z,t) =\left(  \sin(\pi y t)\cos(\pi z t),\;-\cos(\pi z^3 t),\;	\exp(-2\pi x t^2)\right)^T,\\
k(x,y,z,t) =  \sin(\pi x t)+2 ,\\
\varepsilon(x,y,z,t) = \exp(-\pi z t)+1   ,\\
y(x,y,z,t) = \sin(\pi x t)+2 .
\end{gather}
with parameters $\mu=10^{-2}$, $\mathcal{D}=10^{-3}$.
{ For the exact solution to verify the equations, taught expressions
of the source terms have to be taken. They have been included in \ref{sec:appendix_sourceterms}. }

We consider the meshes already defined in Table \ref{tab:mmstest_mesh} and a $\CFL=10$
({ the reason why this large value of CFL is admitted was studied in \cite{BTVC16}}).
Dirichlet boundary conditions  are set for all the equations on the boundary. The computed errors
are presented in Table \ref{mms_turb_err_ord}. The results obtained for CVC-orth confirm that
using only second order in space for computing the flux terms and neglecting the non orthogonal
component will not capture properly the turbulence. Second order in space must also be used to
approximate the diffusion terms and the whole flux should be computed. 
Furthermore, a second order in time scheme {improves} the results and order attained.
\begin{table}\resizebox{\linewidth}{!}{
	\begin{tabular}{|c|c||c|c|c||c|c|}
		\hline Method & Variable & $E_{M_1}$ & $E_{M_2}$ & $E_{M_3}$ & $o_{M_1/M_2}$ & $o_{M_2/M_3}$ \\\hline\hline
		\multirow{5}{*}{Order 1}      &$\pi$                     & $6.97E-01$ & $5.52E-01$ & $4.93E-01$ & $0.34$  & $0.16$\\
		&$\mathbf{w}_{\mathbf{u}}$ & $4.40E-02$ & $2.80E-02$ & $2.18E-02$ & $0.65$  & $0.36$\\
		&$w_{k}$                   & $3.85E-02$ & $2.45E-02$ & $2.09E-02$ & $0.65$  & $0.23$\\
		&$w_{\varepsilon}$         & $1.53E-02$ & $8.47E-03$ & $6.40E-03$ & $0.85$  & $0.40$\\
		&$w_{y}$                   & $2.93E-02$ & $2.04E-02$ & $1.76E-02$ & $0.52$  & $0.21$\\
		\hline
		\multirow{5}{*}{CVC-orth.}  &$\pi$                     & $6.23E-01$ & $5.13E-01$ & $4.75E-01$ & $0.28$ & $0.11$ \\
		&$\mathbf{w}_{\mathbf{u}}$ & $4.08E-02$ & $2.65E-02$ & $2.10E-02$ & $0.62$ & $0.33$ \\
		&$w_{k}$                   & $3.18E-02$ & $2.13E-02$ & $1.95E-02$ & $0.58$ & $0.13$ \\
		&$w_{\varepsilon}$         & $1.51E-02$ & $8.17E-03$ & $6.15E-03$ & $0.89$ & $0.41$ \\
		&$w_{y}$                   & $2.51E-02$ & $1.85E-02$ & $1.68E-02$ & $0.44$ & $0.14$ \\
		\hline                             
		\multirow{5}{*}{CVC-G}      &$\pi$                     & $2.70E-01$ & $7.60E-02$ & $2.09E-02$ & $1.83$  & $1.86$ \\
		&$\mathbf{w}_{\mathbf{u}}$ & $1.50E-02$ & $5.18E-03$ & $1.49E-03$ & $1.54$  & $1.80$ \\
		&$w_{k}$                   & $1.54E-02$ & $3.24E-03$ & $8.22E-04$ & $2.25$  & $1.98$ \\
		&$w_{\varepsilon}$         & $1.06E-02$ & $2.39E-03$ & $6.35E-04$ & $2.15$  & $1.91$ \\
		&$w_{y}$                   & $7.27E-03$ & $1.89E-03$ & $4.86E-04$ & $1.94$  & $1.96$ \\
		\hline
		\multirow{5}{*}{LADER}   &$\pi$                     & $2.68E-01$ & $7.61E-02$ & $2.10E-02$ & $1.82$  & $1.86$ \\
		&$\mathbf{w}_{\mathbf{u}}$ & $1.51E-02$ & $5.17E-03$ & $1.50E-03$ & $1.55$  & $1.79$ \\
		&$w_{k}$                   & $1.37E-02$ & $2.51E-03$ & $5.89E-04$ & $2.45$  & $2.09$ \\
		&$w_{\varepsilon}$         & $9.87E-03$ & $1.80E-03$ & $4.09E-04$ & $2.46$  & $2.14$ \\
		&$w_{y}$                   & $7.25E-03$ & $1.60E-03$ & $3.79E-04$ & $2.18$  & $2.08$ \\
		\hline
	\end{tabular}}
	\caption{ Manufactured test 2. Turbulent flow.  Observed errors and convergence rates. $\CFL=10$.}\label{mms_turb_err_ord}
\end{table}

% % % % % % % % % % % % % % % % % % % % % % % % % % % % % %
% % % % % % % % % % % % % % % % % % % % % % % % % % % % % %
\subsection{Test 3. Gaussian sphere}
The next problem to be analysed is the Gaussian sphere test introduced in
\cite{BS12} and \cite{Saa11}. We consider a normal distribution function in
the domain $\Omega=\left[-0.9,0.9 \right]\times \left[-0.9,0.9 \right] \times \left[-0.3,0.3 \right]$
with standard deviation $0.08$ and mean $0.25$. The density is one, the velocity vector is defined
as $\mathbf{u}(x,y,z,t) = (-y,\; x,\; 0)^T$ and we assume that the diffusion matrix is given by
$\mathcal{D}=\mu$. Hence, the solution of the problem is given by
\begin{equation}
\mathrm{y}(x,y,z,t) = \left(\frac{\sigma_0}{\sigma(t)}\right)^3 \exp\left(\frac{-r}{2 \sigma(t)^2}\right)
\end{equation}
with
\begin{gather}
r(x,y,z,t) = \left(\bar{x}+0.25\right)^2+\bar{y}^2+z^2,\quad
\sigma(t)=\sqrt{\sigma_0^2+2 t \mathcal{D}},\\
\bar{x}=x \cos(t)+y\sin(t),\quad
\bar{y}=-x \sin(t)+y\cos(t).
\end{gather}
The flow definition is completed setting the source terms 
\begin{equation}
\mathbf{f}_{\mathbf{u}}(x,y,z,t)=(-x,-y,0)^T,\quad f_{\mathrm{y}}(x,y,z,t)=0,
\end{equation}
and considering Dirichlet boundary conditions.

In order to analyse the accuracy in time and space, five structured meshes were generated.
The properties of these meshes can be seen in Table \ref{tab:bell_mesh}, where $h$ denotes
the size of the cubes used to generate the tetrahedra of the finite element mesh.
\begin{table}[H]
	\begin{center}
		\begin{tabular}{|c||c|c|c|c|}
			\hline Mesh  &  Finite elements & Vertices & Nodes &  $h $ \\\hline
			\hline $M_1$ &  $ 11664 $ & $ 2527 $ &  $ 24408 $ &  $ 0.1 $ \\
			\hline $M_2$ &  $ 18522 $ & $ 3872 $ &  $ 38514 $ &   $ 0.0857 $ \\
			\hline $M_3$ &  $ 54000 $ & $ 10571 $ &  $ 111000 $ &   $ 0.06 $ \\
			\hline $M_4$ &  $ 93312 $ & $ 17797 $ &  $ 190944 $ &   $ 0.05 $ \\
			\hline $M_5$ &  $ 182250 $ & $ 33856 $ &  $ 256711 $ &   $ 0.04 $ \\
			\hline
		\end{tabular}
		\caption{Test 3. Gaussian sphere. Mesh features.}
		\label{tab:bell_mesh}
	\end{center}
\end{table} 

{Table \ref{tab:bell_err_1e3_LADER}  shows} the results obtained for the test
considering $\mu=10^{-3}$. On the other hand, in {Table \ref{tab:bell_err_1e2_LADER}}
the errors and orders of accuracy for $\mu=10^{-2}$ are presented. In both test cases we have
assumed a final time $t_{\textrm{end}}=2\pi$ so that the sphere completes one revolution.
Two different methodologies were considered to run these tests: CVC-G
and LADER. 
We can observe that for $\mu=10^{-3}$ CVC-G scheme only achieves first order and for $\mu=10^{-2}$
the order obtained is a bit greater but still lower than two for the velocities approach.
Meanwhile, using LADER we obtain the expected second order in both tests cases and the errors
obtained decrease. These improvements are due to considering a second order method in time.
The high diffusivity of the test makes necessary to consider second order in both, time and space,
to achieve good approaches for all the unknowns of the problem.

The previous discussion is also consistent with the graphical results presented in Figures \ref{fig:contours_y_c4_e3_cl}-\ref{fig:c4_e2_curves}.

\begin{table}[H]
		\renewcommand{\arraystretch}{1.5}
	\begin{center}
		\resizebox{\textwidth}{!}{\begin{tabular}{|c||c||c|c||c|c||c|c|}
			\cline{3-8} \multicolumn{2}{c|}{} & \multicolumn{2}{c||}{$\pi$}  & \multicolumn{2}{c||}{ $\mathbf{w} _{\mathbf{u} }$} & \multicolumn{2}{c|}{$w_y$}\\\cline{3-8}
			\multicolumn{2}{c|}{} & $E_{M_i}$ & $o_{M_{i-1}/M_{i}}$ & $E_{M_i}$ & $o_{M_{i-1}/M_{i}}$ & $E_{M_i}$ & $o_{M_{i-1}/M_{i}}$ \\  \hline
			\multirow{5}{1.5cm}{CVC-G}&$M_1$ & $7.48E-02$& &$1.33E-01$ & &$4.00E-02$ & \\\cline{2-8}
			&$M_2$ & $6.59E-02$&$0.82$ &$1.17E-01$ &$0.84$ &$3.56E-02$ &$0.75$ \\\cline{2-8}
			&$M_3$ & $4.75E-02$&$0.92$ &$8.52E-02$  &$0.89$  &$2.63E-02$ & $0.85$\\\cline{2-8}
			&$M_4$ & $4.02E-02$&$0.92$ &$7.21E-02$  &$0.91$ &$2.20E-02$ & $0.99$\\\cline{2-8}
			&$M_5$ & $3.24E-02$&$0.96$ &$5.82E-02$ &$0.96$ &$1.73E-02$ &$1.08$ \\\hline \hline
			\multirow{5}{1.5cm}{LADER} &$M_1$ & $1.02E-03$& &$2.48E-03$ & &$2.31E-02$ & \\\cline{2-8}
			&$M_2$ & $7.25E-04$&$2.19$ &$1.81E-03$ &$2.02$ &$1.83E-02$ &$1.50$ \\\cline{2-8}
			&$M_3$ & $3.23E-04$&$2.27$ &$8.17E-04$  &$2.24$  &$1.00E-02$ & $1.70$\\\cline{2-8}
			&$M_4$ & $2.12E-04$&$2.32$ &$5.41E-04$  &$2.26$ &$7.11E-03$ & $1.88$\\\cline{2-8}
			&$M_5$ & $1.25E-04 $&$2.36$&$3.24E-04 $ &$2.30$ & $4.59E-03 $ &$1.97$\\\hline
		\end{tabular}}
			\caption{Test 3. Gaussian sphere, $\mu=10^{-3}$. Observed errors and convergence rates.  $\CFL=5$ for CVC-G and $\CFL=0.5$ for LADER.}
		\label{tab:bell_err_1e3_LADER}
	\end{center}
\end{table}

\begin{table}[H]
		\renewcommand{\arraystretch}{1.5}
	\begin{center}
		\resizebox{\textwidth}{!}{\begin{tabular}{|c||c||c|c||c|c||c|c|}
				\cline{3-8} \multicolumn{2}{c|}{} & \multicolumn{2}{c||}{$\pi$}  & \multicolumn{2}{c||}{ $\mathbf{w} _{\mathbf{u} }$} & \multicolumn{2}{c|}{$w_y$}\\\cline{3-8}
				\multicolumn{2}{c|}{} & $E_{M_i}$ & $o_{M_{i-1}/M_{i}}$ & $E_{M_i}$ & $o_{M_{i-1}/M_{i}}$ & $E_{M_i}$ & $o_{M_{i-1}/M_{i}}$ \\  \hline
				\multirow{5}{1.5cm}{CVC-G}&$M_1$ & $3.31E-02$& &$5.13E-02$ & &$2.19E-03$ & \\\cline{2-8}
				&$M_2$ & $2.74E-02$&$1.22$ &$4.29E-02$ &$1.16$ &$1.67E-03$ &$1.77$ \\\cline{2-8}
				&$M_3$ & $1.72E-02$&$1.31$ &$2.74E-02$  &$1.26$  &$9.02E-04$ & $1.73$\\\cline{2-8}
				&$M_4$ & $1.34E-02$&$1.39$ &$2.15E-02$  &$1.34$ &$6.62E-04$ & $1.70$\\\cline{2-8}
				&$M_5$ & $9.69E-03$&$1.44$ &$1.57E-02$ &$1.40$ &$4.54E-04$ &$1.69$ \\\hline \hline
				\multirow{5}{1.5cm}{LADER} &$M_1$ & $3.68E-04$& &$9.02E-04$ & &$1.61E-03$ & \\\cline{2-8}
				&$M_2$ & $2.54E-04$&$2.41$ &$6.24E-04$ &$2.02$ &$1.16E-03$ &$2.12$ \\\cline{2-8}
				&$M_3$ & $1.12E-04$&$2.29$ &$2.60E-04$  &$2.24$  &$5.55E-04$ & $2.07$\\\cline{2-8}
				&$M_4$ & $7.57E-05$&$2.15$ &$1.66E-04$  &$2.26$ &$3.85E-04$ & $2.01$\\\cline{2-8}
				&$M_5$ & $4.78E-05$&$2.06$ &$9.54E-05 $ &$2.30$ & $	2.48E-04$ &$1.97$\\\hline
		\end{tabular}}
		\caption{Test 3. Gaussian sphere, $\mu=10^{-2}$. Observed errors and convergence rates.
			$\CFL=5$ for CVC-G and $\CFL=0.5$ for LADER.}
		\label{tab:bell_err_1e2_LADER}
	\end{center}
\end{table}

% % % % % % % % % % % % % % % % % % % % % % % % % % % % % % % %
\begin{figure}[H]
	\centering
	\includegraphics[width=\linewidth]{./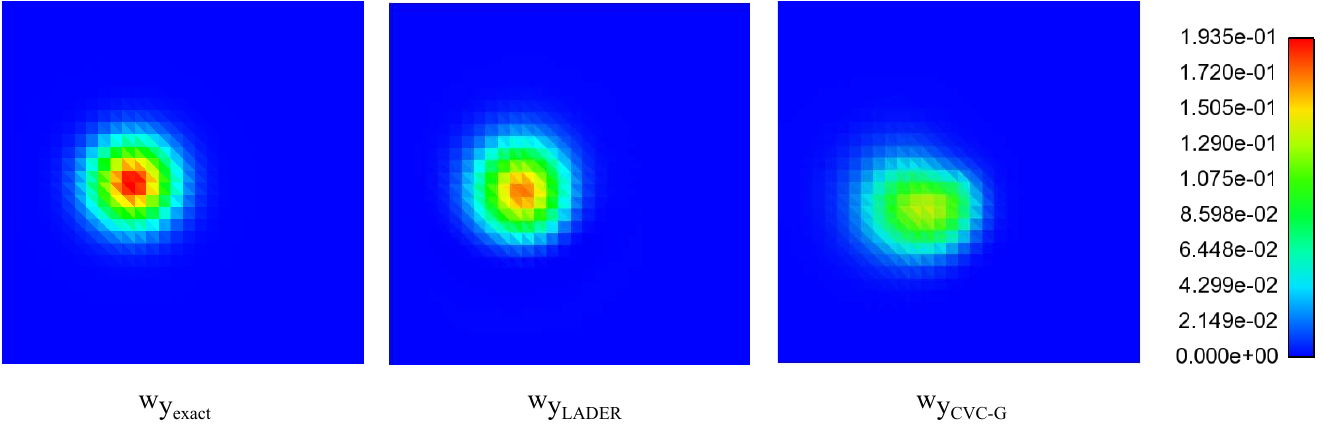}
	 \caption{{Test 3. Gaussian sphere, $\mu=10^{-3}$. Contours of $w_y$ at plane $z=0$ using Mesh $M_3$.
		Left: exact solution. Centre:  LADER. Right: CVC-G.}}
	\label{fig:contours_y_c4_e3_cl}
\end{figure}
\begin{figure}[H]
	\centering
	\includegraphics[width=\linewidth]{./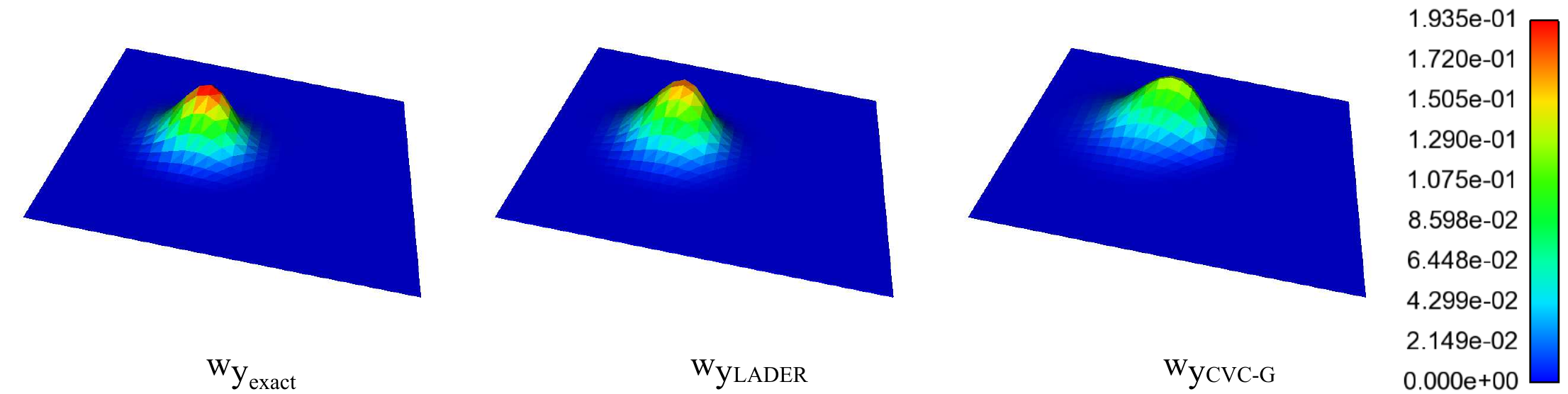}
	\caption{{ Test 3. Gaussian sphere, $\mu=10^{-3}$. Elevated surfaces of $w_y$ at plane $z=0$ using Mesh $M_3$.
		Left: exact solution. Centred: LADER. Right: CVC-G.}}
	\label{fig:elevated_y_c4_e3_cl}
\end{figure}
\begin{figure}[H]
	\centering
	\includegraphics[width=\linewidth]{./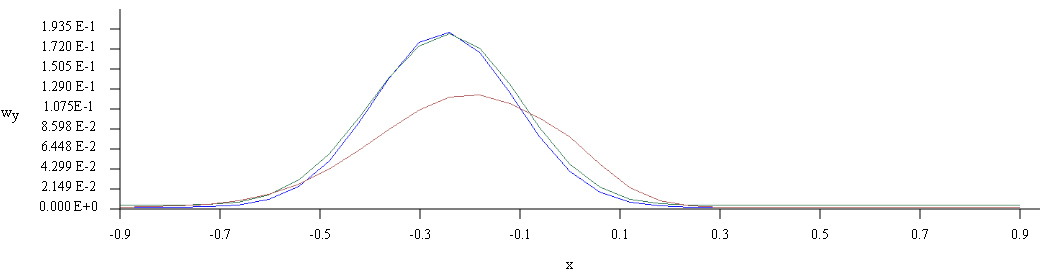}
	\caption{Test 3. Gaussian sphere, $\mu=10^{-3}$. Profile of the exact solution (blue) and the computed
		solutions using LADER (green) and CVC-G (red) at plane $y=0$.}
	\label{fig:campana43_prof}
\end{figure}

\begin{figure}[H]
	\centering
	\includegraphics[width=\linewidth]{./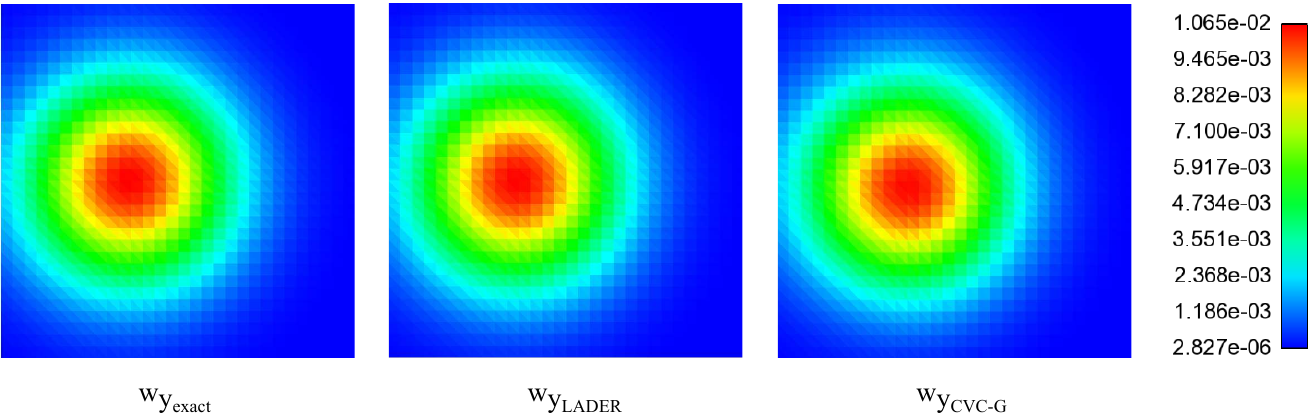}
	\caption{{ Test 3. Gaussian sphere, $\mu=10^{-2}$. Contours of $w_y$ at plane $z=0$ using Mesh $M_3$.
		Left: exact solution. Centre:  LADER. Right: CVC-G.}}
	\label{fig:contours_y_c4_e2_cl}
\end{figure}
\begin{figure}[H]
	\centering
	\includegraphics[width=\linewidth]{./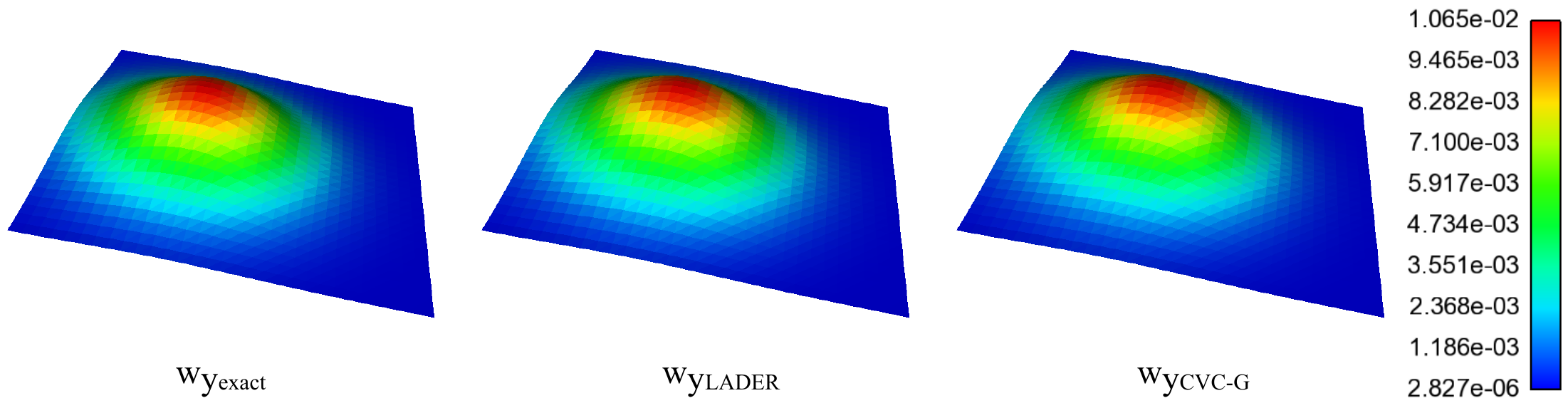}
	\caption{{ Test 3. Gaussian sphere, $\mu=10^{-2}$. Elevated surfaces of $w_y$ at plane $z=0$ using Mesh $M_3$.
		Left: exact solution. Centre:  LADER. Right: CVC-G.}}
	\label{fig:elevated_y_c4_e2_cl}
\end{figure}
\begin{figure}[H]
	\centering
	\includegraphics[width=\linewidth]{./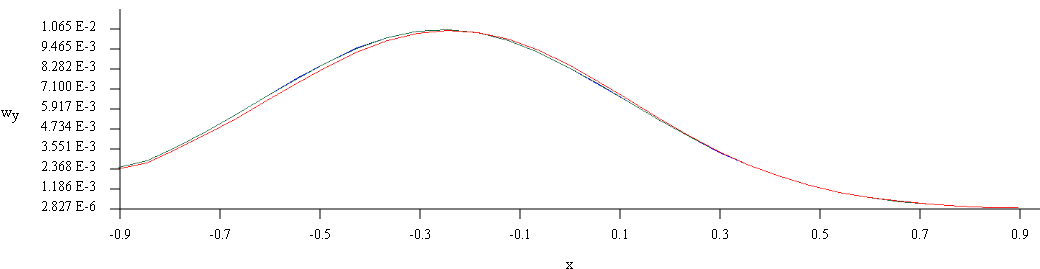}
	\caption{Test 3. Gaussian sphere, $\mu=10^{-2}$. Profile of the exact solution (blue) and the computed
		solutions using LADER (green) and CVC-G (red) at plane $y=0$.}
	\label{fig:c4_e2_curves}
\end{figure}

% % % % % % % % % % % % % % % % % % % % % % % % % % % % % %
% % % % % % % % % % % % % % % % % % % % % % % % % % % % % %
\subsection{Test 4. Flow around a cylinder}
We consider the steady-state problem of a flow around a cylinder which has been introduced in \cite{STDKR96}
and employed, for instance, in \cite{BFSV14} and \cite{Vol02} as a benchmark problem.
The computational domain consists of a solid cylinder surrounded by a rectangular channel
in which the flow evolves (see Figure \ref{fig:dominio}). The dynamic viscosity of the fluid is
$\mu=10^{-3}$ and the inlet velocity has the form
\begin{equation}
\mathbf{u}(x,y,z,t)=\left(16Uyz(H-y)(H-z)/H^4, 0, 0\right)^T,
\end{equation}
with $U=0.45$, $H=0.41$. Based on the viscosity, the cylinder diameter, $D=0.1$, and an estimate
of $0.2$ for the mean inflow velocity, the flow has a Reynolds number of $20$.
At the outlet Neumann boundary conditions are considered.
\begin{figure}
	\centering
	\includegraphics[width=0.7\linewidth]{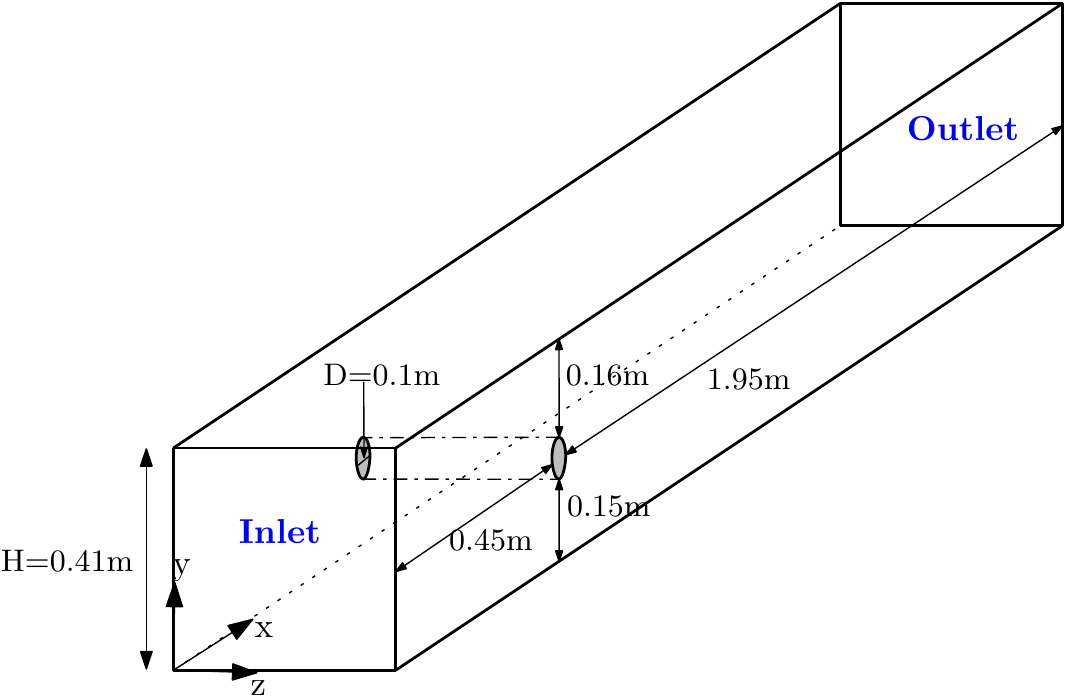}
	\caption{Test 4. Flow around a cylinder. Geometry.}
	\label{fig:dominio}
\end{figure}
The mesh employed to obtain the numerical solution consists of
$449746$ finite elements and $909004$ finite volumes.

The drag and lift coefficients for this problem are expressed by
\begin{equation}
c_d=\frac{500}{0.41}F_d, \text{\ } c_l=\frac{500}{0.41}F_l,
\end{equation}
where $F_d$ and $F_l$ are the drag and lift forces, respectively:
\begin{equation}
F_d=\int_S \left(\mu \frac{\partial \mathbf{u}_{\tau}}{\partial
	\mathbf{n}_S} n_y -\pi n_x \right) \mathrm{dS}, \quad F_l=\int_S
\left(-\mu \frac{\partial \mathbf{u}_{\tau}}{\partial
	\mathbf{n}_S }n_x-\pi n_y \right) \mathrm{dS} \label{eq:fdfl}
\end{equation}
with $ \mathbf{n}_S=(n_x, n_y,0)^t$ the inward pointing unit normal
with respect to  $\Omega$, $S$ the surface of the cylinder, and
$\mathbf{n}_{\tau}=(n_y, -n_x,0)^t$ one of the tangential
vectors, the other one being $(0,0,1)^T$. The drag and lift forces were computed following \cite{BFSV14}. 
As convergence criterion, at iteration $k$,  we consider,
\begin{equation}
\frac{1}{\Delta t} \|\mathbf{W}^{k}_{M}-\mathbf{W}^{k-1}_{M} \|_{L^{\infty}(\Omega)^3}\le 10^{-4} .
\end{equation}

Four different simulations regarding the method employed were run:

\vspace*{0.3cm}
\begin{minipage}{0.11\textwidth}
\end{minipage}\hfill\begin{minipage}{0.88\textwidth}
	\begin{itemize}
		\item[Method 1:] the first order method presented in \cite{BFSV14}, which considers
		the Rusanov scheme as the numerical flux{.}	\item[Method 2:] the second
		order in space and first order in time CVC-orth{.}
		\item[Method 3:] CVC-G, also second order in space and first order in time{.}
		\item[Method 4:] the second order method given by LADER.
	\end{itemize}
\end{minipage}

\vspace*{0.3cm}
\noindent 
The numerical results are summarized in Table \ref{tab:cilindro}.
\begin{table}
	\begin{center}
		\renewcommand{\arraystretch}{1.2}
		\begin{tabular}{|l|c|c|c|c|}
			\hline \multirow{3}{80pt}{Method}  & \multirow{3}{50pt}{Time iterations}  & $C_D$ & $C_L$ & $D\pi$  \\			  
			&  &   (min,max)   & (min,max) & (min,max)\\
			& &    $( 6.05, 6.25)$ & $(0.008, 0.01)$ & $(0.165, 0.175)$\\\hline
			1. Order 1    &  $1745$  & $6.79$   & $0.0062$  & $0.1656$\\
			2. CVC-orth &  $72442$ & $6.2463$ & $-0.00067$ & $0.1651$ \\ %-6.72E-04
			3. CVC-G    &  $73994$ & $6.1619$ & $0.01996$ & $0.1616$\\
			4. LADER &  $85638$ & $6.1249$ & $0.0161$  & $0.1662$ \\
			\hline
		\end{tabular}
		\caption{Test 4. Flow around a cylinder. Obtained values for the aerodynamic
			coefficients and the pressure difference.}\label{tab:cilindro}
	\end{center}
\end{table} 
Along with the aerodynamic coefficients, the pressure difference $D \pi$
between the points $\mathbf{p}_1=(0.45, 0.2, 0.205)$ and $\mathbf{p}_2=(0.55, 0.2, 0.205)$,
has been computed.  We observe that the solutions obtained with the higher order method,
as expected theoretically, are the most accurate { with} respect to the
reference intervals obtained from the experimental data on \cite{STDKR96}. 
Finally, Figures \ref{fig:pressurecptiempos}, \ref{fig:velocitycptiempos} and
\ref{fig:pathlinescptiempos} show the results obtained using LADER methodology.

\begin{figure}[H]
	\centering
	\includegraphics[width=\linewidth]{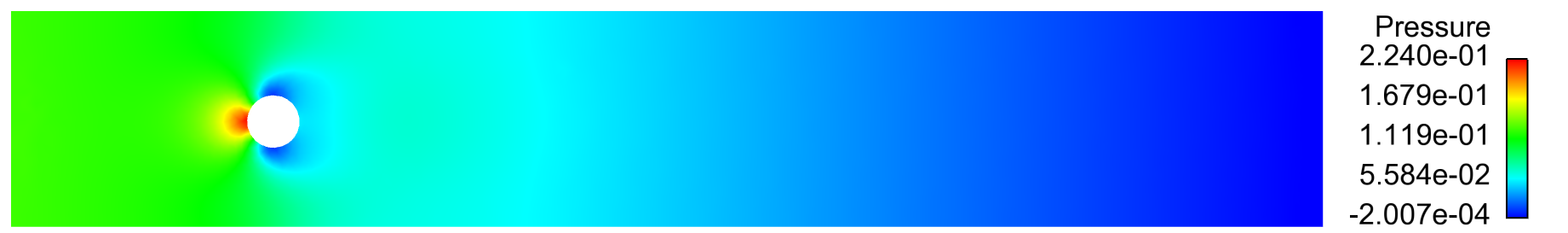}
	\caption{{Test 4. Flow around a cylinder. Pressure on $z=0.205$.}}\label{fig:pressurecptiempos}
	
\end{figure}
\begin{figure}[H]
	\centering
	\includegraphics[width=\linewidth]{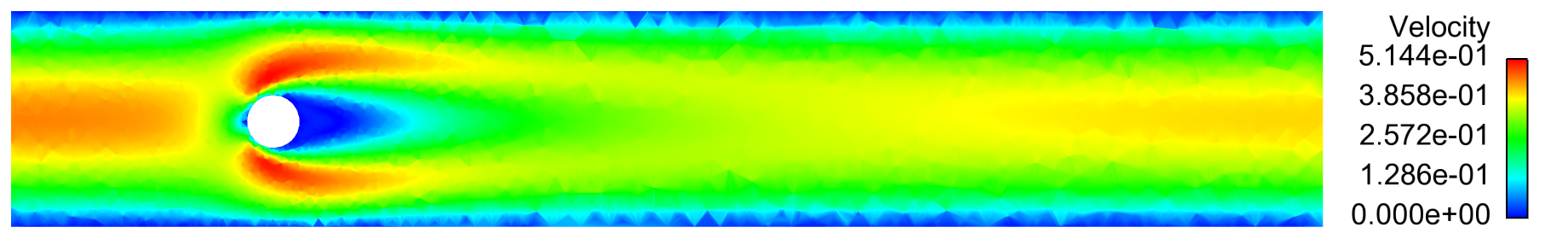}
	\caption{{Test 4. Flow around a cylinder. Velocity  magnitude on $z=0.205$.}}
	\label{fig:velocitycptiempos}
\end{figure}
\begin{figure}[H]
	\centering
	\includegraphics[width=\linewidth]{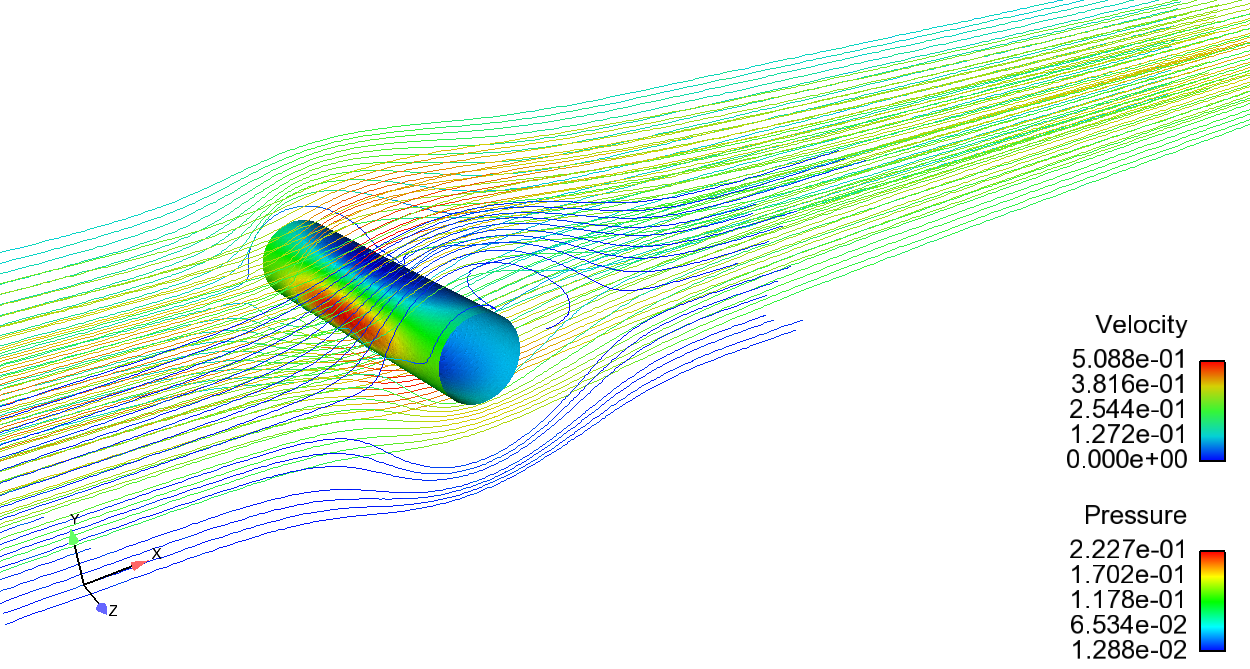}
	\caption{Test 4. Flow around a cylinder. Pressure over the cylinder and streamlines.}
	\label{fig:pathlinescptiempos}
\end{figure}

% % % % % % % % % % % % % % % % % % % % % % % % % % % % % %
% % % % % % % % % % % % % % % % % % % % % % % % % % % % % %
%              Conclusions                                %
% % % % % % % % % % % % % % % % % % % % % % % % % % % % % %
% % % % % % % % % % % % % % % % % % % % % % % % % % % % % %
\section{Summary and conclusions}
In this paper a  projection hybrid high order FV/FE  method for incompressible flows has been presented.
Navier-Stokes equations have been coupled with the $k-\varepsilon$ model in order to simulate turbulent flows.
The system to be solved was enlarged with respect to \cite{BFSV14} considering species transport.
High order of accuracy has shown necessary for the proper computation of turbulent effects.
Two different methodologies to achieve second order were presented. Firstly, CVC Kolgan provided a
second order in space and first order in time scheme. To attain second order in space and time a
new method was proposed: LADER. The corresponding accuracy and stability analysis were presented for
the advection-diffusion-reaction equation. Godunov's theorem was circumvented thanks to an ENO-based approach.
The computation of the gradients involved { on diffusion} terms was done via Galerkin.
The method was applied to manufactured test problems in order to asses the accuracy. Furthermore,
different benchmarks were considered and the results obtained were successfully confronted with experimental data.

% % % % % % % % % % % % % % % % % % % % % % % % % % % % % %
% % % % % % % % % % % % % % % % % % % % % % % % % % % % % %
%              Acknowledgements                           %
% % % % % % % % % % % % % % % % % % % % % % % % % % % % % %
% % % % % % % % % % % % % % % % % % % % % % % % % % % % % %
\section*{{ Acknowledgements}}
This work was financially supported by Spanish MICINN projects
MTM2008-02483,
CGL2011-28499-C03-01
and MTM2013-43745-R;
by the Spanish MECD under grant FPU13/00279; 
by the Xunta de Galicia Conseller\'ia de Cultura Educaci\'on
e Ordenaci\'on Universitaria under grant Axudas de apoio
\'a etapa predoutoral do Plan I2C; by Xunta de Galicia and FEDER under research project GRC2013-014
and 
by Fundaci\'on Barri\'e under grant \textit{Becas de posgrado en el extranjero}.

% % % % % % % % % % % % % % % % % % % % % % % % % % % % % %
% % % % % % % % % % % % % % % % % % % % % % % % % % % % % %
%              Bibliography
% % % % % % % % % % % % % % % % % % % % % % % % % % % % % %
% % % % % % % % % % % % % % % % % % % % % % % % % % % % % %
\bibliographystyle{plain}
\bibliography{./mibiblio}

% % % % % % % % % % % % % % % % % % % % % % % % % % % % % %
% % % % % % % % % % % % % % % % % % % % % % % % % % % % % %
%                   Appendix                              %
% % % % % % % % % % % % % % % % % % % % % % % % % % % % % %
% % % % % % % % % % % % % % % % % % % % % % % % % % % % % %
\appendix
\section{LADER}\label{sec:appendix}
ADER methodology was successfully extended in \cite{BTVC16} to solve
advection-diffusion-reaction equations. The developed method, ADER-ADRE,
is of second order in space and time and stability can be ensured by determining
the time step taking into account the advection, diffusion and reaction coefficients.
Despite this method is easily programmed for the one dimensional code, the computation
of the fluxes focusing on a particular finite volume at each time is not suitable for
the mesh structure we have in the three-dimensional {case}. In this case, we would like
to take profit from the loop on the faces of the finite volumes and reduce the
computational cost. To do that, the LADER method, which preserves the second order
and the stability of the ADER-ADRE method, was developed.

Let us consider the advection-diffusion-reaction equation
\begin{equation}
\partial_t q(x,t) +\lambda \partial_x q(x,t) =\partial_{x} \left(\alpha(x,t) \partial_x q(x,t)\right) +\beta  q(x,t)  \label{ADRE}
\end{equation}
where 
\begin{itemize}
	\item $q(x,t)$ is the conservative variable,
	\item $x,t$ are the spatial and temporal independent variables,
	\item  $\lambda$ is the characteristic speed,
	\item $\alpha(x,t)$ is the diffusion coefficient, a prescribed function,
	\item $\beta$ is the coefficient of the reaction term. 
\end{itemize}
Then, LADER method is divided into the following steps:
\begin{description}
	\item[Step 1.] Polynomial reconstruction.	
	We consider a reconstruction of the data in terms of piecewise first-degree polynomials of the form
	\begin{equation}
	p_{i}(x)=\left\lbrace \begin{array}{lr}
	p_{i\,L}(x)=q_i^n+\Delta_{i\,L}^n (x-x_i),  & x\in\left(x_{i-\frac{1}{2}},x_{i}\right],\\[10pt]
	p_{i\,R}(x)=q_i^n+\Delta_{i\,R}^n (x-x_i), & x\in\left[x_{i},x_{i+\frac{1}{2}}\right),\end{array}	
	\right.
	\end{equation}
	where $\Delta_{i\,L}^n$, $\Delta_{i\,R}^n$ denote the approximations of the spatial
	derivatives of $q(x,t)$ at time $t^{n}$ related to two auxiliary elements of volume
	$\mathcal{C}_i= 	\left[x_{i-\frac{1}{2}},x_{i+\frac{1}{2}}\right]$: 
	\begin{equation}
	T_{i-1 i \, L}=\left[x_i,x_{i+1}\right], \qquad
	T_{i i+1 \, R}=\left[x_{i-1},x_{i}\right]
	\end{equation}
	(see Figure \ref{fig:ader_local_malla}).
	
	\item[Step 2.] \label{grp} Solution of the generalized Riemann problem (GRP).	
	To construct the numerical flux at $x_{i+\frac{1}{2}}$ the following generalizations
	of the Classical Riemann Problem are made. On the one hand, the initial condition is
	assumed to be a piecewise first-degree polynomial. On the other hand, the partial
	differential equation accounts for the diffusion and reaction terms. That leads to the problem
	\begin{equation}
	\left\lbrace \begin{array}{l}
	\partial_t q\left(x,t\right)+\lambda \partial_x q\left(x,t\right) = \partial_x\left( \alpha\partial_x q\right)\left(x,t\right)+\beta q\left(x,t\right),\\
	q(x,0)=\left\lbrace \begin{array}{ll}
	p_{i\, R}(x), & x<0,\\
	p_{i+1\, L}(x), & x>0.
	\end{array} \right.
	\end{array}\right.\label{eq:GRP}
	\end{equation}
	
	\item[Step 3.] \label{adre_difusion} \label{ader_source} Diffusion and reaction terms.	
	These terms are computed by approximating the integrals by the mid-point rule in both space and time.	
\end{description} 

\begin{figure}
	\centering
	\includegraphics[width=\linewidth]{./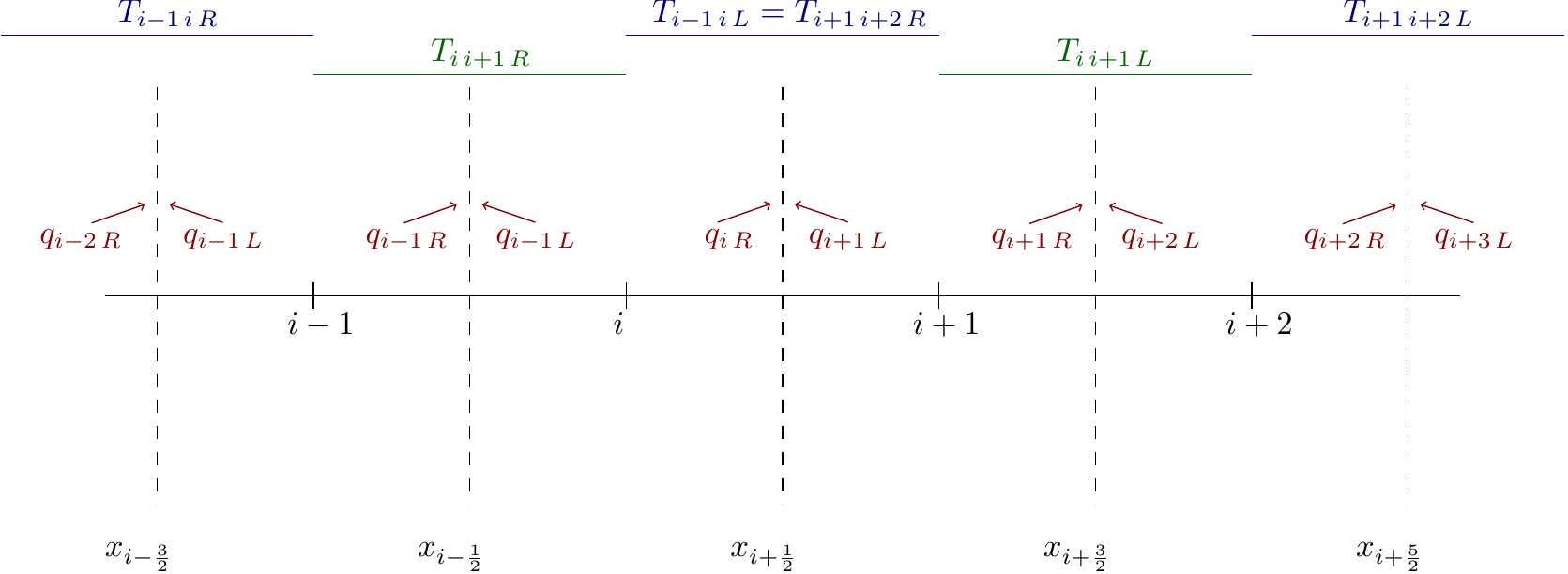}
	\caption{Mesh and nomenclature.}
	\label{fig:ader_local_malla}
\end{figure}

The solution of the GRP at the interface $x_{i+\frac{1}{2}}$, set in Step 2,
is expressed as a Taylor series expansion in time, namely,
\begin{equation}
\overline{q}_{i+\frac{1}{2}}=q(0,0_{+})+\tau\partial_t q(0,0_{+}).
\end{equation}
The first term of the above equation is computed as the solution of the classical Rieman problem
\begin{equation}
\left\lbrace \begin{array}{l}
\partial_t q\left(x,t\right)+\lambda \partial_x q\left(x,t\right) =0,\\
q(x,0)=\left\lbrace \begin{array}{ll}
q_{i\, R}, & x<0,\\
q_{i+1\, L} , & x>0,
\end{array} \right.
\end{array}\right.
\end{equation} 
where
\begin{eqnarray}
q_{i\, R}=q_{i} + \frac{\Delta x}{2}\Delta_{i\, R}^{n} = q_{i}+ \frac{q_{i}-q_{i-1}}{2} =  \frac{1}{2} \left(3q_{i}-q_{i-1}\right), \\
q_{i+1\, L}=q_{i+1} - \Delta_{i+1\, L}^{n} = q_{i+1}- \frac{q_{i+2}-q_{i+1}}{2}=  \frac{1}{2} \left(3q_{i+1}-q_{i+2}\right).
\end{eqnarray}
Then, 
\begin{equation*}
q\left( 0,0_{+}\right) =\left\lbrace
\begin{array}{lr}
q_{i\, R}, & \lambda>0,\\
q_{i+1\, L},& \lambda<0.
\end{array}
\right.
\end{equation*}

The second term is computed following the Cauchy-Kovalevskaya procedure which allows us
to express the time derivative of the conservative variable as a combination of the spatial derivatives,
\begin{equation}
\partial_t q(x,t)=-\lambda \partial_x q(x,t) +\partial_x\left( \alpha\partial_x q\right)\left(x,t\right)+\beta q\left(x,t\right),
\end{equation}
so,
\begin{equation}\overline{q}_{i+\frac{1}{2}} = q(0,0_+) + \tau\left[-\lambda \partial_x q(0,0_+) + \partial_x\left( \alpha\partial_x q\right)  (0,0_+)  + \beta q(0,0_+)\right].\end{equation}
For now on, we will focus on the scheme for $\lambda>0$ (the case $\lambda<0$ is analogous). Approximating 
\begin{eqnarray}
\partial_x q(0,0_+) =\Delta_{i+\frac{1}{2}}^n\approx \frac{1}{\Delta x}\left(q_{i+1}^n-q_{i}^n\right),\\
\partial_x\left(\alpha\partial_x q\right)\left(0,0_+\right)\ =
\left( \Delta\alpha\Delta\right)_{i+\frac{1}{2}}^n
\notag\\
\approx \frac{1}{\Delta x^2} \left[ \alpha_{i+1}^n\left(q_{i+2}^n-q_{i+1}^n \right) -\alpha_{i}^n\left(q_{i}^n-q_{i-1}^n \right)  \right]
\end{eqnarray}
and performing exact integration, the numerical flux reads
\begin{gather}
f_{i+\frac{1}{2}}^{n}
= \lambda \overline{q}_{i-\frac{1}{2}^{n}} 
=\lambda \left\lbrace  q_{i\, R}^n + \frac{\Delta t}{2} \left[ - \lambda \Delta_{i+\frac{1}{2}}^n + \left(\Delta\alpha\Delta\right)_{i+\frac{1}{2}}^n
+\beta q_{i}^n \right]\right\rbrace \notag \\
=\lambda \left\lbrace  q_{i}^n+\frac{1}{2}  \left( q_{i}^n-q_{i-1}^n\right) - \frac{\lambda \Delta t}{2\Delta x}\left(  q_{i+1}^n-q_{i}^n\right)
\right.\notag\\\left.+\frac{\Delta t}{2\Delta x^2}\left[\alpha_{i+1}^{n}\left(q_{i+2}^{n}-q_{i+1}^{n}\right)
-\alpha_{i}^{n}\left(q_{i}^{n}-q_{i-1}^{n}\right)\right]
+\beta \frac{\Delta t}{2} q_{i}^n
\right\rbrace .
\end{gather}

For the diffusion and reaction terms computation we follow \cite{BTVC16}. {We consider the centred slopes
\begin{gather}
\Delta_i^{n}=\frac{q_{i+1}^n-q_{i-1}^{n}}{2\Delta x},\\
\left(\Delta\alpha\Delta\right)_{i}^{n}=\frac{\alpha^{n}_{i+\frac{1}{2}}\left( q_{i+1}^{n}-q_{i}^{n}\right)-\alpha^{n}_{i-\frac{1}{2}}\left( q_{i}^{n}-q_{i-1}^{n}\right) } {\Delta x^2}
\end{gather}
 and the upwind slope
\begin{equation}
	\breve{\Delta}_{i+\frac{1}{2}}^{n} = q_{i+1}^{n}-q_{i}^{n}.
\end{equation}
Then, the evolved values of the  diffusion and reaction terms read
\begin{gather}
\overline{\left(\Delta \alpha \Delta\right)_i^{n}}=
\frac{\overline{\alpha^n_{i+\frac{1}{2}}} \, \overline{\Delta_{i+\frac{1}{2}}}- \overline{\alpha^n_{i-\frac{1}{2}}}\, \overline{\Delta_{i-\frac{1}{2}}}}{\Delta x} \notag
=
\frac{1
}{\Delta x^2} \left( \overline{\alpha^n_{i+\frac{1}{2}}} \overline{\breve{\Delta}_{i+\frac{1}{2}}^{n}}
-\overline{\alpha^n_{i-\frac{1}{2}}}  \overline{\breve{\Delta}_{i-\frac{1}{2}}^{n}} \right)\notag\\
= \! \frac{1}{\Delta x^2} \left\lbrace \!
\left[ \alpha^n_{i+\frac{1}{2}} \! +\! \frac{\Delta t}{2} \partial_t \alpha_{i+\frac{1}{2}}^{n}\right] \!\! \left[ \!\phantom{\frac{a}{a}\!\!\!\!}
\breve{\Delta}_{i+\frac{1}{2}}^{n}
\!+ \! \frac{\Delta t}{2} \left(
\left(\Delta\alpha\Delta\right)_{i+1}^{n} \!-\! \left(\Delta\alpha\Delta\right)_{i}^{n}\! +\! \beta \breve{\Delta}_{i+\frac{1}{2}}^{n} \right) 
\right] 
\right. \notag\\ \left.
+\!\left[ \alpha^n_{i-\frac{1}{2}}\! +\! \frac{\Delta t}{2}  \partial_t \alpha_{i-\frac{1}{2}}^{n} \right]\!\!\! \left[\!\!\phantom{\frac{a}{a}\!\!\!\!}
-\!\breve{\Delta}_{i-\frac{1}{2}}^{n}
\!+\! \frac{\Delta t}{2} 
\left(\! \! \phantom{\frac{a}{a}\!\!\!\!} \left(\Delta\alpha\Delta\right)_{i-1}^{n}\! -\! \left(\Delta\alpha\Delta\right)_{i}^{n}\! -\!\beta\breve{\Delta}_{i-\frac{1}{2}}^{n} 
\phantom{\frac{a}{a}\!\!\!\!}\!\right) 
\right] \!
\right\rbrace\!,  \label{eq:diffusion_adre}
\end{gather}
\begin{equation}
%s_i^{n}=
\beta \overline{q_i^{n}} = \beta\left[ q_i^{n}+\frac{\Delta t}{2}\left(-\lambda \Delta_i^{n} +\left( \Delta\alpha \Delta\right)_i^{n}+\beta q_i^{n}\right) \right] \label{eq:source_adre}\end{equation}
with $\overline{\alpha^n_{i+\frac{1}{2}}}$ and $\overline{\alpha^n_{i-\frac{1}{2}}}$ approximated likewise in \cite{BTVC16}.

\begin{remark2}
	It is important to notice that the evolution of the diffusion term does not account for the advection term. The local treatment proposed in LADER scheme produce a evolved flux which already contains the whole contribution of the assembling of advection and diffusion terms. Hence, the second order of accuracy in space and time will be attained only if we neglect the advection term in the computation of the evolved diffusion.
\end{remark2}
}

Finally, denoting $c=\frac{\lambda \Delta t}{\Delta x}$ the Courant number and $r=\beta \Delta t$ the reaction number, the finite volume scheme for the advection-diffusion-reaction equation results
{
\begin{gather}
q_{i}^{n+1}=q_{i}^{n}-c \left\lbrace \breve{\Delta}_{i-\frac{1}{2}}+\frac{1}{2} \breve{\Delta}_{i-\frac{1}{2}}^{n} - \frac{c}{2}\breve{\Delta}_{i+\frac{1}{2}}^{n}
+\frac{\Delta t}{2\Delta x^2}\left[\alpha_{i+1}^{n}\breve{\Delta}_{i+\frac{3}{2}}^{n}
-\alpha_{i}^{n}\breve{\Delta}_{i-\frac{1}{2}}^{n}\right]
+ \frac{r}{2} \breve{\Delta}_{i-\frac{1}{2}}
\right. \notag\\\left.
-\frac{1}{2}  \breve{\Delta}_{i-\frac{3}{2}}^{n} + \frac{c}{2}\breve{\Delta}_{i-\frac{1}{2}}^{n}
-\frac{\Delta t}{2\Delta x^2}\left[\alpha_{i}^{n}  \breve{\Delta}_{i+\frac{1}{2}}^{n}
-\alpha_{i-1}^{n}\breve{\Delta}_{i-\frac{3}{2}}^{n}\right]
\right\rbrace + \frac{\Delta t}{\Delta x^2} \left\lbrace
\left[ \alpha^n_{i+\frac{1}{2}} + \frac{\Delta t}{2} \partial_t \alpha_{i+\frac{1}{2}}^{n}\right] \right. \notag\\\left.
 \left[ 
\breve{\Delta}_{i+\frac{1}{2}}^{n}
+ \frac{\Delta t}{2\Delta x^2}\left( \alpha^{n}_{i+\frac{3}{2}}\breve{\Delta}_{i+\frac{3}{2}}^{n}-  2\alpha^{n}_{i+\frac{1}{2}}\breve{\Delta}_{i+\frac{1}{2}}^{n} + \alpha^{n}_{i-\frac{1}{2}}\breve{\Delta}_{i-\frac{1}{2}}^{n}\right)  + \frac{r}{2}\breve{\Delta}_{i+\frac{1}{2}}^{n}
\right]
\right. \notag\\
\left. 
 +\left[ \alpha^n_{i-\frac{1}{2}} + \frac{\Delta t}{2}  \partial_t \alpha_{i-\frac{1}{2}}^{n} \right] \left[
-\breve{\Delta}_{i-\frac{1}{2}}^{n}
+ \frac{\Delta t}{2\Delta x^2}\left(-\alpha^{n}_{i+\frac{1}{2}}\breve{\Delta}_{i+\frac{1}{2}}^{n}+ 2\alpha^{n}_{i-\frac{1}{2}}\breve{\Delta}_{i-\frac{1}{2}}^{n}-\alpha^{n}_{i-\frac{3}{2}}\breve{\Delta}_{i-\frac{3}{2}}^{n} \right) 
\right. \right. \notag\\\left. \left. 
- \frac{r}{2}\breve{\Delta}_{i-\frac{3}{2}}^{n}
\right]\right\rbrace +r\left[ q_i^{n}-\frac{c}{4}\left( q_{i+1}^n-q_{i-1}^{n}\right)
 +\frac{\Delta t}{2\Delta x^2}\left( \alpha^{n}_{i+\frac{1}{2}}\breve{\Delta}_{i+\frac{1}{2}}^{n}-\alpha^{n}_{i-\frac{1}{2}}\breve{\Delta}_{i-\frac{1}{2}}^{n} 
\right)+\frac{r}{2} q_i^{n} \right].
\label{eq:local_ader3}
\end{gather}}

\begin{remark2}
	The scheme for the advection-diffusion-reaction equation with constant diffusion coefficient reads
{
\begin{gather}
q_{i}^{n+1}=q_{i}^{n}-c \left[ \breve{\Delta}_{i-\frac{1}{2}}^{n} +\frac{1}{2}  \breve{\Delta}_{i-1}^{n}  - \frac{c}{2}\breve{\Delta}_{i}^{n} 
+\frac{d}{2}\left(\breve{\Delta}_{i+1}^{n}-\breve{\Delta}_{i-1}^{n} \right)
+ \frac{r}{2} \breve{\Delta}_{i-\frac{1}{2}}^{n} 
\right] \notag\\
+ d \left[ 	\breve{\Delta}_{i}^{n}  + \frac{d}{2}\left( \breve{\Delta}_{i+1}^{n} -2 \breve{\Delta}_{i}^{n} + \breve{\Delta}_{i-1}^{n}  \right) 
+ \frac{r}{2}\breve{\Delta}_{i}^{n} 
\right] 
\notag \\
+r\left[ q_i^{n}-\frac{c}{4}\left( q_{i+1}^n-q_{i-1}^{n}\right)+\frac{d}{2}\breve{\Delta}_{i}^{n} +\frac{r}{2} q_i^{n} \right]
\label{eq:local_ader_alphacnst}
\end{gather}}
	where $d=\frac{\alpha\Delta t}{\Delta x^2}$ { and $\breve{\Delta}_{i}^{n} = q_{i+1}^{n}-2q_{i}^{n}+q_{i-1}^{n} $}.
\end{remark2}

\begin{remark2}
	There exist $c_{M}, \, d_{M},\, r_{m}\in \mathbb{R}$ such that the LADER scheme,
	\eqref{eq:local_ader_alphacnst}, is stable in the 4-orthotopes
	\begin{align} O_{c_{M}, d_{M}, r_{m}}=\left\lbrace (\theta,c,r,d)\; |\; \theta\in[-\pi,\pi],\; c\in [0,c_{M}],\; d \in[0,d_{M}], \;\right.\notag\\ \left.  r \in[r_{m},0],\; c_{M},\, d_{M}\in\mathbb{R}^{+}, \; r_{m}\in\mathbb{R}^{-}\right\rbrace. \label{eq:rc_cdr}\end{align}
	To represent a feasible 4-orthotope we consider the isosurface of level one of 
	\begin{equation}
	m_{\theta}(c,d,r)=\max_{\theta\in [-\pi,\pi]} \left\| A(\theta,c,d,r)\right\| \label{eq:amplification_mtheta}
	\end{equation}
	where $A(\theta,c,d,r)$ is the function of the amplification factor of the scheme (see \cite{BTVC16}). 
	In Figure \ref{fig:stability_localader} we can observe that the 4-orthotope defined by
	$c_{M}=0.3$, $d_{M}=0.2$ and $r_{m}=-0.5$ is embedded in the stability region.
	\begin{figure}
		\centering
		\includegraphics[width=0.5\linewidth]{./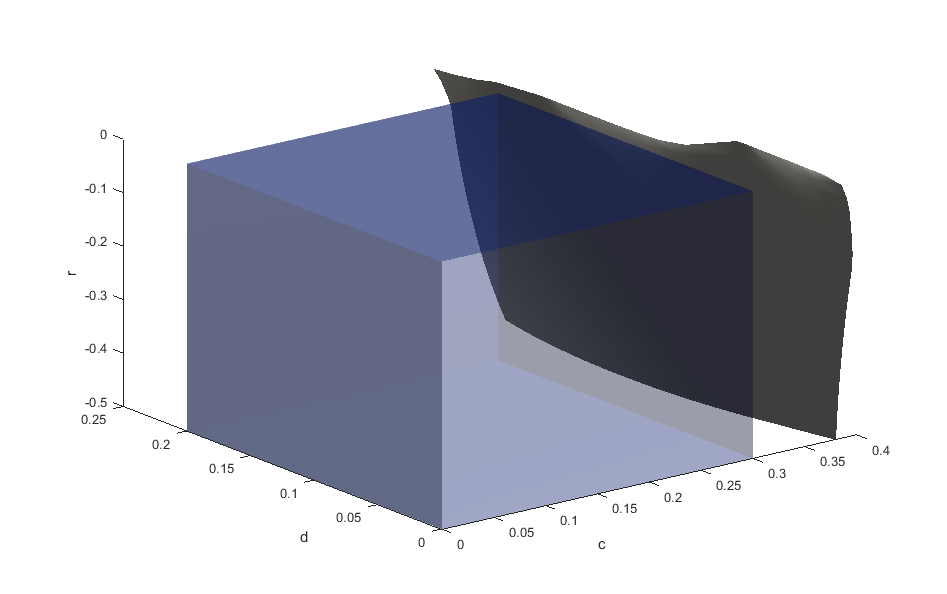}\hfill
		\includegraphics[width=0.5\linewidth]{./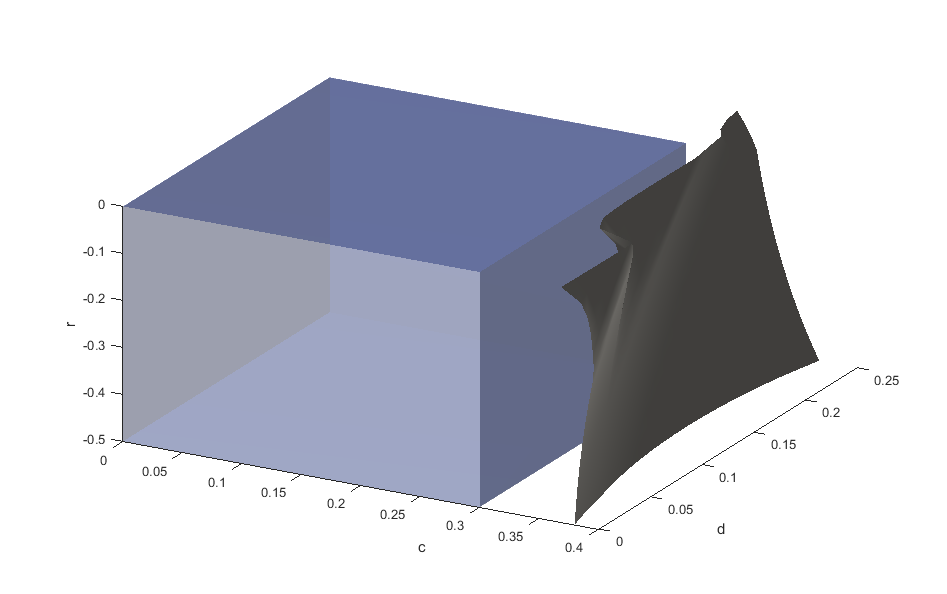}
		\caption{Two different views of the isosurface of level one of function $m_{\theta}$, \eqref{eq:amplification_mtheta},
			(grey) and the 4-orthotope of stability $\mathcal{O}_{0.3,0.2,-0.5}$ for the linear advection-diffusion-reaction equation (blue).}
		\label{fig:stability_localader}
	\end{figure}
\end{remark2}

\begin{lema}
	LADER scheme, \eqref{eq:local_ader3}, is of second order in time and space.	
\end{lema}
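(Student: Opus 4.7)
The plan is to establish second-order accuracy by a standard local truncation error analysis: substitute a smooth exact solution $q(x,t)$ of \eqref{ADRE} into the update formula \eqref{eq:local_ader3} and show that the residual is $O(\Delta t^2+\Delta x^2)$. The overall structure mirrors the ADER-ADRE analysis of \cite{BTVC16}, but must be redone for the local variant because the stencil and the evolved fluxes differ.

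First I would set $Q_i^n:=q(x_i,t^n)$ and Taylor-expand each of the building blocks $\breve\Delta_{i+\frac12}^n$, $\Delta_i^n$, $(\Delta\alpha\Delta)_i^n$ and $\alpha_{i\pm\frac12}^n$ about $(x_i,t^n)$. Using standard finite-difference calculus one gets $\breve\Delta_{i+\frac12}^n=\Delta x\,\partial_x q+\tfrac{\Delta x^2}{2}\partial_{xx}q+O(\Delta x^3)$, $(\Delta\alpha\Delta)_i^n=\partial_x(\alpha\partial_x q)+O(\Delta x^2)$, and $\Delta_i^n=\partial_x q+O(\Delta x^2)$, all evaluated at $(x_i,t^n)$. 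Collecting the pieces of the numerical flux one obtains the ADER ansatz
\begin{equation*}
\tfrac{1}{\lambda}f_{i+\frac12}^n = q\bigl(x_{i+\frac12},t^n\bigr)+\tfrac{\Delta t}{2}\,\partial_t q\bigl(x_{i+\frac12},t^n\bigr)+O(\Delta t^2+\Delta x^2),
\end{equation*}
which is precisely the midpoint-rule approximation of the time-averaged flux up to second order. A similar expansion for the diffusion and reaction blocks \eqref{eq:diffusion_adre}--\eqref{eq:source_adre} shows that each represents its continuous counterpart at the midpoint $(x_i,t^n+\Delta t/2)$ with the same $O(\Delta t^2+\Delta x^2)$ remainder.

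Next I would invoke the Cauchy–Kovalevskaya relation $\partial_t q=-\lambda\partial_x q+\partial_x(\alpha\partial_x q)+\beta q$ to convert the time derivative that appears in the expanded flux into spatial derivatives, thereby recovering exactly the midpoint value. Summing the three contributions one finds
\begin{equation*}
\frac{Q_i^{n+1}-Q_i^n}{\Delta t}=\bigl[-\lambda\partial_x q+\partial_x(\alpha\partial_x q)+\beta q\bigr]_i^{\,n+\frac12}+O(\Delta t^2+\Delta x^2),
\end{equation*}
while the exact solution satisfies the same identity with a residual of order $O(\Delta t^2)$ from the time derivative on the left. Subtracting, the local truncation error is $O(\Delta t^2+\Delta x^2)$, which, combined with the stability shown in the preceding remark and the Lax equivalence theorem, yields the claimed second order of accuracy.

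The main obstacle is the bookkeeping induced by the space- and time-dependent diffusion coefficient: the terms $\overline{\alpha^n_{i\pm\frac12}}=\alpha^n_{i\pm\frac12}+\tfrac{\Delta t}{2}\partial_t\alpha_{i\pm\frac12}^n$ interact with the evolved slopes $\overline{\breve\Delta_{i\pm\frac12}^n}$, so one must be careful that the cross terms reproduce $\tfrac{\Delta t}{2}\partial_t[\partial_x(\alpha\partial_x q)]$ to the required order, and that the advection contribution inside the evolved diffusion (which has been deliberately dropped, per the remark) does not spoil consistency. A term-by-term match of every second-order monomial $\Delta t^2$, $\Delta t\Delta x$, $\Delta x^2$ is the delicate part, but once the midpoint interpretation of each block is established, the cancellations are the same as in the one-dimensional ADER-ADRE analysis of \cite{BTVC16}.
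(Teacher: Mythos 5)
Your proposal is correct and follows essentially the same route as the paper: a local truncation error analysis that Taylor-expands each block of \eqref{eq:local_ader3}, reuses the ADER-ADRE analysis of \cite{BTVC16} for the unchanged terms, and invokes the Cauchy--Kovalevskaya relation to cancel the $O(\Delta t)$ residuals, yielding $\tau^n=\mathcal{O}(\Delta t^2)+\mathcal{O}(\Delta x^2)+\mathcal{O}(\Delta x\,\Delta t)$. The only differences are cosmetic (your midpoint-rule framing versus the paper's direct expansion about $(x_i,t^n)$, and your added appeal to stability and Lax equivalence, which the paper does not need since the lemma concerns only the order of consistency); to turn the plan into a complete proof you would still have to carry out the term-by-term expansions that the paper writes out explicitly.
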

\begin{proof}
	To prove the accuracy of Scheme \eqref{eq:local_ader3}, we recall the analysis 
	carried out for the ADER scheme introduced in \cite{BTVC16} 
	and we detail the terms which have changed:
	
	\begin{itemize}
		\item Local truncation error contribution of the flux term neglecting the diffusion term contribution:
	\end{itemize}
	\begin{align}
	&&& \frac{\lambda}{\Delta x} \left[ q(x_{{ i}},t^{n}) -(x_{{ i}-1},t^{n}) 
	+\frac{1}{2} \left(  q(x_{{ i}},t^{n}) - 2 q(x_{{ i}-1},t^{n}) + q(x_{{ i}-2},t^{n})\right) 
	\right. \notag\\&&- & \left.
	\frac{c}{2} \left( q(x_{{ i}+1},t^{n}) - 2 q(x_{{ i}},t^{n}) + q(x_{{ i}-1},t^{n})\right)\right]
	\notag\\&=& &
	\frac{\lambda}{\Delta x} \left\lbrace 
	\partial_{x} q(x_{{ i}},t^{n}) \Delta x - \frac{1}{2}\partial_{x}^{(2)} q(x_{{ i}},t^{n})\Delta x^{2} +
	\frac{1}{6}\partial_{x}^{(3)} q(x_{{ i}},t^{n})\Delta x^{3}+\mathcal{O}\left(\Delta x^{4}\right) \right. \notag\\&&+&\left.
	\frac{1}{2}  \left[ 
	\partial_{x}^{(2)} q(x_{{ i}},t^{n})\Delta x^{2}- \partial_{x}^{(3)} q(x_{{ i}},t^{n})\Delta x^{3}+\mathcal{O}\left(\Delta x^{4}\right)
	\right]
	\right. \notag\\&&-& \left.  \frac{c}{2}\left[\partial_{x}^{(2)} q(x_{{ i}},t^{n})\Delta x^{2}-
	\frac{1}{12}\partial_{x}^{(4)} q(x_{{ i}},t^{n})\Delta x^{4}+\mathcal{O}\left(\Delta x^{5}\right)
	\right] 
	\right\rbrace\notag\\
	&=& &\lambda\partial_{x} q(x_{{ i}},t^{n})- \frac{\lambda^{2}\Delta t}{2} \partial_{x}^{(2)} q(x_{{ i}},t^{n}) +\mathcal{O}\left(\Delta x^{2}\right)
	\end{align}
	
	\begin{itemize}
		\item Local truncation error contribution of the diffusion term to the flux term:
	\end{itemize}
	\begin{align}
	& & & \frac{\lambda \Delta t}{2 \Delta x^2} \left\lbrace\left[ 
	\alpha(x_{{ i}+1},t^{n})\frac{q(x_{{ i}+2},t^{n})-q(x_{{ i}+1},t^{n}) }{\Delta x}
	- \alpha(x_{{ i}},t^{n})\frac{q(x_{{ i}},t^{n})-q(x_{{ i}-1},t^{n}) }{\Delta x} \right]
	\right. \notag\\& & -&\left.\left[ 
	\alpha(x_{{ i}},t^{n})\frac{q(x_{{ i}+1},t^{n})-q(x_{{ i}},t^{n}) }{\Delta x}
	- \alpha(x_{{ i}-1},t^{n})\frac{q(x_{{ i}-1},t^{n})-q(x_{{ i}-2},t^{n}) }{\Delta x} \right]
	\right\rbrace\notag\\
	& = & & \frac{\lambda \Delta t}{2 \Delta x^3} \left\lbrace
	\alpha(x_{{ i}+1},t^{n})\left[   \partial_x q(x_{{ i}},t^{n}) \Delta x
	+\frac{3}{2}\partial_{x}^{(2)} q(x_{{ i}},t^{n}) \Delta x^2\right.\right.\notag\\
	& & + &\left. \left.\left.  \frac{7}{6}\partial^{(3)}_x q(x_{{ i}},t^{n}) \Delta x^3
	+\mathcal{O}(\Delta x^4)\right] - \alpha(x_{{ i}},t^{n})\left[\phantom{\frac{b}{b}}\!\!\!
	\right.\right.\right.\notag\\ & & &\left. \left. 2\partial_x q(x_{{ i}},t^{n}) \Delta x
	+\frac{1}{3}\partial^{(3)}_x q(x_{{ i}},t^{n}) \Delta x^3+\mathcal{O}(\Delta x^4) )  \right]
	\right. \notag\\ & & +&\left.
	\alpha(x_{{ i}-1},t^{n})\left[ \partial_x q(x_{{ i}},t^{n}) \Delta x
	-\frac{3}{2}\partial^{(2)} q(x_{{ i}},t^{n}) \Delta x^2 
	\right.\right.\notag\\ & & + &\left.\left.\left.
	\frac{7}{6}\partial^{(3)} q(x_{{ i}},t^{n}) \Delta x^3+\mathcal{O}(\Delta x^4)\right]  \right.\right.\notag\\ 
	& = & & \frac{\lambda \Delta t}{2\Delta x^3} \left[ \partial_x^{(2)}\alpha(x_{{ i}},t^{n})
	\partial_x q(x_{{ i}},t^{n}) \Delta x^3 + 3\partial_x \alpha_{i}^{n}\partial_x^{(2)}
	q(x_{{ i}},t^{n}) \Delta x^3\right.\notag\\& & +&\left.
	2 \alpha_{i}^{n}\partial_x^{(3)}q(x_{{ i}},t^{n}) \Delta x^3 +\mathcal{O}\left(\Delta x^4\right)\right]\notag\\
	& = & & \frac{\lambda \Delta t}{2}\left[
	\partial_{x}^{(2)}\left(  \alpha(x_{{ i}},t^{n})\partial_x q(x_{{ i}},t^{n})\right)
	+ \partial_x\left( \alpha(x_{ i},t^n)\partial_x^{(2)} q(x_{ i},t^n) \right)
	\right]\notag\\
	& & +& \mathcal{O}(\Delta x \Delta t)
	\end{align}
	
	\begin{itemize}  
		\item Local truncation error contribution of the diffusion term:
	\end{itemize}
	\begin{align}
	& & - &\frac{1}{\Delta x^2} \left\lbrace 
	\overline{ \alpha}(x_{{ i}+\frac{1}{2}},t^n)   \left[ \phantom{\frac{b}{b}}\!\!\!
	q(x_{{ i}+1},t^n)-q(x_{{ i}},t^{n}) \right.\right.\notag\\ & & +& \left. \left.
	+\frac{\Delta t}{2}\left( \phantom{\frac{b}{b}}\!\!\! \frac{1}{\Delta x^2}\alpha(x_{{ i}
		+\frac{3}{2}},t^n) \left[  q(x_{{ i}+2},t^{n})-q(x_{{ i}+1},t^{n})\right] \right. \right.\right.\notag\\	& &
	- & \left.\left.\left. \frac{1}{\Delta x^2}\alpha(x_{{ i}+\frac{1}{2}},t^n)\left[  2q(x_{{ i}+1},t^{n})-2q(x_{{ i}},t^{n})\right]
	\right.\right.\right. \notag\\ 
	& &+& \left.\left.\left. \frac{1}{\Delta x^2}\alpha(x_{{ i}-\frac{1}{2}},t^n)\left[  q(x_{{ i}},t^{n})-q(x_{{ i}-1},t^{n})\right]
	\right.\right.\right. \notag\\   
	& &+ & \left.\left.\left.  \beta\left[ q(x_{{ i}+1},t^{n})-q(x_{{ i}},t^{n})\right] \phantom{\frac{b}{b}}\!\!\!\right) 
	\right] 
	\right.\notag\\ & & + &\left.
	\overline{\alpha}(x_{{ i}-\frac{1}{2}},t^n)  \left[\phantom{\frac{b}{b}}\!\!\!\
	q(x_{{ i}-1},t^{n})-q(x_{{ i}},t^{n})
	\right. \right.\notag\\ & & +& \left. \left.
	\frac{\Delta t}{2}\left( \phantom{\frac{b}{b}}\!\!\!\! -\frac{1}{\Delta x^2}\alpha(x_{{ i}+\frac{1}{2}},t^n)
	\left[  q(x_{{ i}+1},t^{n})-q(x_{{ i}},t^{n})\right]  \right. \right.\right.\notag\\
	& & + & \left. \left. \left. 
	\frac{1}{\Delta x^2}\alpha(x_{{ i}-\frac{1}{2}},t^n)\left[  2q(x_{{ i}},t^{n})-2q(x_{{ i}-1},t^{n})\right]
	\right.\right.\right. \notag\\ & & -&\left.\left.\left.
	\frac{1}{\Delta x^2}\alpha(x_{{ i}-\frac{3}{2}},t^n)\left[  q(x_{{ i}-1},t^{n})-q(x_{{ i}-2},t^{n})\right]
	\right.\right.\right. \notag\\ & & +&\left.\left.\left.
	\beta\left[ q(x_{{ i}-1},t^{n})-q(x_{{ i}},t^{n})\right] \phantom{\frac{b}{b}}\!\!\!\right) 
	\right] 
	\right\rbrace\notag\\
	&=& - &\partial_x\left(\alpha(x_{ i},t^n)\partial_x q(x_{ i},t^n)\right) + \frac{\Delta t}{2}\left\lbrace
	\phantom{\frac{b}{b}}\!\!\! \partial_x\left(\partial_t\alpha(x_{ i},t^n)\partial_x q(x_{ i},t^n)\right) \right. \notag\\& & +&\left.
	\partial_x\left[\alpha(x_{ i},t^n)\partial^{(2)}_x\left(  \alpha(x_{ i},t^n) \partial_xq(x_{ i},t^n)\right) \right]
	\right. \notag\\ & & +&\left.
	\beta \partial_x\left(\alpha(x_{ i},t^n)\partial_x q(x_{ i},t^n)\right) \phantom{\frac{b}{b}}\!\!\!
	\right\rbrace 
	+\mathcal{O}(\Delta x^2 )+\mathcal{O}(\Delta x \Delta t ).
	\end{align}
	
	Finally, taking into account the truncation error of the remaining terms of the scheme and Cauchy-Kovalevskaya equality, we get
	\begin{align}
	\tau^n& =  & & \partial_t q(x_{ i},t^n) +\lambda \partial_x q(x_{ i},t^n) -\partial_x\left[
	\alpha(x_{ i},t^n)\partial_x q(x_{ i},t^n)\right] - \beta q(x_{ i},t^n) 
	\notag\\	& & +&
	\frac{\Delta t}{2} \left\lbrace\phantom{\frac{b}{b}}\!\!\! \partial^{(2)}_t q(x_{ i},t^n) +\lambda
	\partial_x \left[  -\lambda  \partial_x q(x_{ i},t^n) + \beta q(x_{ i},t^n)  \right] \right.\notag\\
	& & -&\left. \beta\left[  -\lambda\partial_x q(x_{ i},t^n)  +\beta q(x_{ i},t^n) \right]  
	- \partial_x\left[\partial_t\alpha(x_{ i},t^n)\partial_x q(x_{ i},t^n)\right] \right. \notag\\& &+&\left.
	\lambda \partial_x\left[\alpha(x_{ i},t^n)\partial^{(2)}_x q(x_{ i},t^n)\right]
	- \partial_x\left\lbrace\alpha(x_{ i},t^n)\partial^{(2)}_x\left[  \alpha(x_{ i},t^n) \partial_xq(x_{ i},t^n)\right] \right\rbrace
	\right. \notag\\ & &-&\left.
	\beta \partial_x\left[\alpha(x_{ i},t^n)\partial_x q(x_{ i},t^n)\right]
	-\beta\partial_x\left[ \alpha(x_{ i},t^n)\partial_x q(x_{ i},t^n)\right]  \right.  \notag\\
	& & + &\left.
	\partial^{(2)}_x\left[\alpha(x_{ i},t^n)\partial_x q(x_{ i},t^n)\right] \phantom{\frac{b}{b}}\!\!\!\right\rbrace
	 + \mathcal{O}\left(\Delta t^2 \right) +\mathcal{O}\left(\Delta x^2 \right) +\mathcal{O}\left(\Delta x \Delta t \right)
	\notag\\	& =  & &\mathcal{O}\left(\Delta t^2 \right) +\mathcal{O}\left(\Delta x^2 \right)+\mathcal{O}(\Delta x \Delta t ).
	\end{align}
\end{proof}

{
\section{Manufactured tests.  Source terms}\label{sec:appendix_sourceterms}
In this appendix we describe the source terms used in the  manufactured test (see Sections \ref{sec:pb_mms_laminar} and \ref{sec:manufacturedtest2}): 

\begin{itemize}
	\item Manufactured test 1. Laminar flow.
\end{itemize}
{\footnotesize 
	\begin{eqnarray}
\mathbf{f}_{\mathbf{u}_1}(x,y,z,t)&=&
\pi y \cos(\pi t y) \cos(\pi t z) - \pi t \sin(\pi t (x + y + z)) \notag\\& &- \pi z \sin(\pi t y) \sin(\pi t z) 
+ 2 \pi^2 t^2 \mu \sin(\pi t y) \cos(\pi t z) \notag\\& &- \pi t \cos(\pi t z^3) \cos(\pi t y) \cos(\pi t z) \notag\\
& &- \pi t \sin(\pi t y) \sin(\pi t z) \exp(-2 \pi t^2 x),\\
\mathbf{f}_{\mathbf{u}_2}(x,y,z,t)&=&
\pi z^3 \sin(\pi t z^3) - \pi t \sin(\pi t (x + y + z)) - 6 \pi t z \mu \sin(\pi t z^3) \notag\\
& &- 9 \pi^2 t^2 z^4 \mu \cos(\pi t z^3) + 3 \pi t z^2 \exp(-2 \pi t^2 x) \sin(\pi t z^3),\\
\mathbf{f}_{\mathbf{u}_3}(x,y,z,t)&=&
- \pi t \sin(\pi t (x + y + z)) - 4 \pi^2 t^4 \mu \exp(-2 \pi t^2 x) \notag\\& &- 4 \pi t x \exp(-2 \pi t^2 x) 
\notag\\& &
- 2 \pi t^2 \sin(\pi t y) \exp(-2 \pi t^2 x) \cos(\pi t z).
\end{eqnarray}}

\begin{itemize}
\item Manufactured test 2. Turbulent flow with species transport.
\end{itemize}
{\footnotesize 
\begin{eqnarray}
\mathbf{f}_{\mathbf{u}_1}(x,y,z,t)&=&(2 \pi t  \cos(\pi t  x))/3 -  \pi t  \sin( \pi t ( x +  y +  z)) +  \pi  y  \cos( \pi t  y)  \cos( \pi t  z) \notag\\
&&-  \pi  z  \sin( \pi t  y)  \sin( \pi t  z) + 2  \pi^2 t^2  \mu  \sin( \pi t  y)  \cos( \pi t  z) \notag\\
&&-  \pi t  \cos( \pi t  z^3)  \cos( \pi t  y)  \cos( \pi t  z) -  \pi t  \sin( \pi t  y)  \sin( \pi t  z) \exp(-2  \pi t^2  x) \notag\\
&&+ (2  \pi^2 C_{\mu} t^2  \sin( \pi t  y)  \cos( \pi t  z) ( \sin( \pi t  x) + 2)^2)/(\exp(- \pi t  z) + 1) \notag\\
&&+ ( \pi^2 C_{\mu} t^2  \sin( \pi t  y)  \sin( \pi t  z) \exp(- \pi t  z)
\notag\\ &&
( \sin( \pi t  x) + 2)^2)/(\exp(- \pi t  z) + 1)^2,\\
\mathbf{f}_{\mathbf{u}_2}(x,y,z,t)&=& \pi  z^3  \sin( \pi t  z^3) -  \pi t  \sin( \pi t ( x +  y +  z)) - 6  \pi t  z  \mu  \sin( \pi t  z^3) \notag\\
&& 9  \pi^2 t^2  z^4  \mu  \cos( \pi t  z^3) + 3  \pi t  z^2 \exp(-2  \pi t^2  x)  \sin( \pi t  z^3) \notag\\
&&- (9  \pi^2 C_{\mu} t^2  z^4  \cos( \pi t  z^3) ( \sin( \pi t  x) + 2)^2)/(\exp(- \pi t  z) + 1) \notag\\
&&- (6  \pi C_{\mu} t  z  \sin( \pi t  z^3) ( \sin( \pi t  x) + 2)^2)/(\exp(- \pi t  z) + 1) \notag\\
&&- (3  \pi^2 C_{\mu} t^2  z^2 \exp(- \pi t  z)  \sin( \pi t  z^3)\notag\\
&& ( \sin( \pi t  x) + 2)^2)/(\exp(- \pi t  z) + 1)^2,\\[6pt]
\mathbf{f}_{\mathbf{u}_3}(x,y,z,t)&=&(4  \pi^2 C_{\mu} t^3 \exp(-2  \pi t^2  x)  \cos( \pi t  x) ( \sin( \pi t  x) + 2))/(\exp(- \pi t  z) + 1) \notag\\
&&- 4  \pi^2 t^4  \mu \exp(-2  \pi t^2  x) - 4  \pi t  x \exp(-2  \pi t^2  x) \notag\\
&&- 2  \pi t^2  \sin( \pi t  y) \exp(-2  \pi t^2  x)  \cos( \pi t  z) \notag\\
&&- (4  \pi^2 C_{\mu} t^4 \exp(-2  \pi t^2  x) ( \sin( \pi t  x) + 2)^2)/(\exp(- \pi t  z) + 1) \notag\\
&&-  \pi t  \sin( \pi t ( x +  y +  z)),\\[6pt]
f_{k}(x,y,z,t)&=& \exp(- \pi t  z) +  \pi  x  \cos( \pi t  x) \notag\\
&&- (C_{\mu} ( \sin( \pi t  x) + 2)^2 (2  \pi t^2 \exp(-2  \pi t^2  x) \notag\\
&&+  \pi t  \sin( \pi t  y)  \sin( \pi t  z))^2)/(\exp(- \pi t  z) + 1) \notag\\
&&+  \pi^2 t^2  \mu  \sin( \pi t  x) +  \pi t  \sin( \pi t  y)  \cos( \pi t  x)  \cos( \pi t  z) \notag\\
&&- (9  \pi^2 C_{\mu} t^2  z^4  \sin( \pi t  z^3)^2 ( \sin( \pi t  x) + 2)^2)/(\exp(- \pi t  z) + 1) \notag\\
&&- ( \pi^2 C_{\mu} t^2  \cos( \pi t  y)^2  \cos( \pi t  z)^2 ( \sin( \pi t  x) + 2)^2)/(\exp(- \pi t  z) + 1) \notag\\
&&- (2  \pi^2 C_{\mu} t^2  \cos( \pi t  x)^2 ( \sin( \pi t  x) + 2))/(\sigma_{k} (\exp(- \pi t  z) + 1)) \notag\\
&&+ ( \pi^2 C_{\mu} t^2  \sin( \pi t  x) ( \sin( \pi t  x) + 2)^2)/(\sigma_{k} (\exp(- \pi t  z) + 1)) + 1,\\[6pt]
f_{\varepsilon}(x,y,z,t)&=& 
(C_{2\,\varepsilon} (\exp(- \pi t  z) + 1)^2)/( \sin( \pi t  x) + 2) -  \pi  z \exp(- \pi t  z) \notag\\
&&-  \pi^2 t^2  \mu \exp(- \pi t  z) \notag\\
&&- (C_{1\,\varepsilon} (\exp(- \pi t  z) + 1) ((C_{\mu} ( \sin( \pi t  x) + 2)^2 (2  \pi t^2 \exp(-2  \pi t^2  x) \notag\\
&&+  \pi t  \sin( \pi t  y)  \sin( \pi t  z))^2)/(\exp(- \pi t  z) + 1) \notag\\
&&+ (9  \pi^2 C_{\mu} t^2  z^4  \sin( \pi t  z^3)^2 ( \sin( \pi t  x) + 2)^2)/(\exp(- \pi t  z) + 1) \notag\\
&&+ ( \pi^2 C_{\mu} t^2  \cos( \pi t  y)^2  \cos( \pi t  z)^2 ( \sin( \pi t  x) \notag\\
&&+ 2)^2)/(\exp(- \pi t  z) + 1)))/( \sin( \pi t  x) + 2) \notag\\
&&-  \pi t \exp(- \pi t  z) \exp(-2  \pi t^2  x) \notag\\
&&- ( \pi^2 C_{\mu} t^2 \exp(- \pi t  z) ( \sin( \pi t  x) + 2)^2)/(\sigma_{\varepsilon} (\exp(- \pi t  z) + 1)) \notag\\
&&+ ( \pi^2 C_{\mu} t^2 \exp(-2  \pi t  z) ( \sin( \pi t  x) + 2)^2)/(\sigma_{\varepsilon} (\exp(- \pi t  z) + 1)^2),\\[6pt]
f_{y}(x,y,z,t)&=&  \pi  x  \cos( \pi t  x) +  \pi^2 t^2  \sin( \pi t  x) (\mathcal{D} + (C_{\mu} ( \sin( \pi t  x) \notag\\
&&+ 2)^2)/(Sc_{t} (\exp(- \pi t  z) + 1))) \notag\\
&&+  \pi t  \sin( \pi t  y)  \cos( \pi t  x)  \cos( \pi t  z)\notag\\
&& - (2  \pi^2 C_{\mu} t^2  \cos( \pi t  x)^2 ( \sin( \pi t  x) + 2))/(Sc_{t} (\exp(- \pi t  z) + 1)).
\end{eqnarray}} }

\end{document}